\definecolor {processblue}{cmyk}{0.96,0,0,0}
\newtheorem{theorem}{Theorem}
\newtheorem{lemma}[theorem]{Lemma}
\newtheorem{proposition}[theorem]{Proposition}
\def\qed{\ifhmode\unskip\nobreak\fi\quad\ifmmode\Box\else$\Box$\fi}
\begin{document}
\title{Escaping from a quadrant of the $6\times 6$ grid\\
by edge disjoint paths}
\author{{\sl Adam S. Jobson} \\ University of Louisville\\ Louisville, KY 40292
\and {\sl Andr\'e E. K\'ezdy}\\ University of Louisville\\ Louisville, KY 40292
\and {\sl Jen\H{o} Lehel} \\ University of Louisville\\ Louisville, KY 40292
\\
and 
\\
Alfr\'ed R\'enyi Mathematical Institute, \\
Budapest, Hungary}

\maketitle

\begin{abstract}
Let $G$ be the Cartesian product of two finite 
paths, called a grid, and let $T$ be the set of eight distinct vertices of $G$, called terminals. Assume that $T$ is partitioned into four terminal pairs 
$\{s_i,t_i\}$, $1\leq i\leq 4$, to be linked in $G$ by using edge disjoint paths.  To prove that such a linkage always exists 
we need a sequence of  technical lemmas making possible for some terminals to  `escape' from a $3\times 3$ corner of $Q\subset G$, called a `quadrant'. Here we state those lemmas, and give a proof  for the cases when $Q$ contains at most $4$ terminals. 
\end{abstract}

\section{Introduction}

Let $P_k$ be a path with $k$ vertices. The Cartesian product of two $k$-paths,
$P_k\Box P_k$, defines a $k\times k$ grid.  {The vertices of the grid $P_k\Box P_k$ are represented as elements $(i,j)$ of a matrix arranged 
in rows $A(i)$ and columns $B(j)$, $1\leq  i,j\leq k$, where two vertices, $(i,j)$ and $(p,q)$, are adjacent if and only if $|p-i|+|q-j|=1$.

 Here we are dealing with the $6\times 6$ grid $G=P_6\Box P_6$.
Let $T=\{s_1,t_1,s_2,t_2,s_3,t_3,$ $s_4,t_4\}$ be the set of eight distinct vertices of $G$, called {\it terminals}. The set $T$ is partitioned into
four terminal pairs,  $\pi_i=\{s_i,t_i\}$, $1\leq i\leq 4$, to be linked in $G$ by edge disjoint paths. A (weak) {\it linkage} for $\pi_i$,  $1\leq i\leq 4$, means a set of edge disjoint $s_i,t_i$-paths $P_i\subset G$.
  
  In \cite{66} we prove that there exists such linkage, for every choice of $T$, that is the $6\times 6$ grid $G$ is $4$-path pairable. 
  The proof starts with  partitioning the grid  $G$ into four $3\times 3$ grids, called {\it quadrants}. A laborious case analysis in \cite{66} discusses the linkage between the terminals lying in the same or in distinct quadrants. The proof uses a 
  a sequence of  technical lemmas making possible for some terminals to  `escape' from a quadrant $Q\subset G$.
  
We say that a set of  terminals in a quadrant $Q\subset G$ {\it escape} from $Q$
 if there are pairwise edge disjoint `mating paths' from the terminals into distinct mates (exits) located at the union of a horizontal  and a vertical boundary line of $Q$ leading to neighbouring quadrants.
 
 A quadrant $Q\subset G$  is considered to be `crowded', if it contains five or more terminals.
  Among the technical lemmas used in  \cite{66} the proof of three lemmas pertaining to crowded quadrants is presented in \cite{heavy}. It is worth noting that the  lemmas for  crowded quadrants are also applied in \cite{infty}, where it is verified that the infinite grid is $4$-path pairable. Here in Section \ref{crowded}
 we just restate these lemmas without proof.   In Section \ref{light} we state and prove the technical lemmas for `sparse' quadrants containing at most  four terminals. 

\section{Escaping from a crowded quadrant}
\label{crowded}
In the proof of the $4$-path pairability of $P_6\Box P_6$ in \cite{66}
and  that of  the infinite grid $P_\infty\Box P_\infty$ in \cite{infty}, we needed a sequence of technical lemmas to escape terminals  from a $3\times 3$ subgrid $Q$. The proof of three technical lemmas is presented  in \cite{heavy} when the quadrant contains  five or more terminals. Here we just restate them without proof.

Let $G\cong P_6\Box P_6$, let $Q$ be a quadrant of $G$, and let $T\subset G$ be the union of four pairwise disjoint  terminal pairs 
$\{s_i,t_i\}$, $1\leq i\leq 4$.
Let $A$ be a horizontal and  let $B$ be a vertical boundary line
of $Q$.  For a subgraph $S\subseteq G$ set $\|S\|=|T\cap S|$.

 \begin{lemma}
\label{heavy78}
 If $\|Q\|=7$ or $8$,
then there is
 a linkage for two or more pairs in $Q$, and there exist edge disjoint escape  paths for the unlinked terminals into distinct 
 exit vertices in $A\cup B$.\qed
\end{lemma}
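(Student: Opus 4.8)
The plan is to reduce the statement to a finite check governed by the positions of the terminals inside the $3\times 3$ grid $Q$, and then to exhibit explicit edge-disjoint linking and escape systems in each resulting configuration. Write $c$ for the unique corner of $Q$ at which $A$ and $B$ meet. The reflection of $Q$ across the diagonal through $c$ fixes $c$, interchanges $A$ and $B$, carries exit edges to exit edges, and sends any solution to a solution of the reflected instance, so throughout I may use this order-two symmetry to halve the number of cases. First I record the available capacity: $Q$ has $9$ vertices and $12$ edges, while $A\cup B$ consists of $5$ vertices carrying $6$ edges that leave $Q$ into the neighbouring quadrants, the corner $c$ alone carrying two of them. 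Thus at most $5$ terminals can be sent to distinct exit vertices, and any linkage together with its escape system must fit inside the $12$ internal edges.

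I separate the two cases. If $\|Q\|=8$ then every pair has both terminals in $Q$ and exactly one vertex of $Q$ is free; up to the diagonal symmetry the free vertex lies in one of six positions (the far corner, the exit corner $c$, a boundary corner other than $c$, the centre, a far edge-midpoint, or an exit-adjacent edge-midpoint), giving six families to treat. If $\|Q\|=7$ then, since $7$ is odd, exactly one pair is split: three pairs lie wholly in $Q$ and a single terminal $s$ has its mate outside, so two vertices of $Q$ are free. Here $s$ is forced to escape, which is an additional constraint the construction must respect.

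The key lever is the freedom in the phrase \emph{two or more pairs}: I need not escape every in-quadrant terminal, and may instead close off inside $Q$ any pair that would otherwise compete for the scarce exits. The routing strategy is therefore to link internally, by short paths confined to the part of $Q$ away from $A\cup B$, those pairs whose terminals sit deep inside $Q$, thereby relieving demand at the exits, and then to route each remaining terminal to a nearest free exit vertex, breaking ties so that the escape paths avoid the edges already used by the internal links. For $\|Q\|=8$, linking just two pairs leaves four terminals to escape through five exits; for $\|Q\|=7$, linking the three full pairs leaves only the singleton $s$ to escape. In every family one of these two budgets leaves enough slack.

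To certify edge-disjointness I verify, in each family, a cut condition in the spirit of Menger's theorem: no set of internal edges smaller than the current escape demand separates the terminals still to be escaped from $A\cup B$ in the graph obtained by deleting the edges of the internal links. Because the $12$-edge budget is tight, this cannot be read off from a single generic flow bound; the heart of the proof is the explicit construction, configuration by configuration, of the combined linking-and-escape system, with a check that each edge is used at most once. I expect the main obstacle to be exactly the configurations in which terminals cluster at the far corner and the centre of $Q$, since there the escape paths are long, forced through the same few edges, and most likely to collide with the internal links; these congested cases must be resolved by hand, using the choice of which pairs to close off and, where it helps, linking three or four pairs rather than two.
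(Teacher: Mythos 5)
You should first be aware that this paper contains no proof of Lemma~\ref{heavy78} to compare against: it is restated here with a \qed{} precisely because its proof is outsourced to the companion paper \cite{heavy}. Your proposal therefore has to stand entirely on its own, and as written it is a plan rather than a proof. Every load-bearing step is deferred: you say the heart of the matter is ``the explicit construction, configuration by configuration,'' and that the congested cases ``must be resolved by hand,'' but not a single configuration is actually constructed and not a single escape system is exhibited. For a statement of this kind the case analysis is not routine bookkeeping a reader can be trusted to fill in --- it \emph{is} the lemma, which is why the authors devote a separate paper to it.

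Beyond the incompleteness, two concrete defects in the plan itself. First, your six ``families'' for $\|Q\|=8$ are indexed only by the position of the terminal-free vertex, but an instance is determined by the partition of the eight terminals into the pairs $\pi_1,\dots,\pi_4$; which pairs can be closed off inside $Q$ depends entirely on that pairing, so each family splits (even after using the diagonal symmetry) into many genuinely different cases, and similarly for $\|Q\|=7$ with the additional choice of the singleton. Second, the budget you lean on for $\|Q\|=7$ --- link all three full pairs, escape only the singleton --- fails outright in some configurations. Take $Q$ to be the upper left quadrant with $A\cap B=\{(3,3)\}$, and let the full pairs be $\{(1,1),(3,3)\}$, $\{(1,3),(3,1)\}$, $\{(1,2),(2,1)\}$. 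The endpoints of the first two pairs alternate around the boundary cycle of $Q$, so by planarity any two edge disjoint linking paths for them must share a vertex, and an interior shared vertex needs degree at least four; the only such vertex is $(2,2)$, so those two paths exhaust all four edges at $(2,2)$. The third path must then leave $(1,2)$ through $(1,1)$ or $(1,3)$ and reach $(2,1)$ through $(1,1)$ or $(3,1)$, hence it passes through a degree-two corner that is an endpoint of another pair, using both of its edges --- a contradiction. So no three edge disjoint linkages exist, and you are thrown back on your other budget, ``link two pairs and escape three terminals to distinct exits,'' which is exactly the assertion of the lemma and is nowhere verified. The Menger-type cut condition you invoke does not rescue this: it would have to be checked in the complement of internal links you never specify, and it does not by itself enforce that the exits are \emph{distinct} vertices of $A\cup B$.
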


 \begin{lemma}
   \label{heavy6}
If $\|Q\|=6$, then there is a linkage for one or more pairs in $Q$, and there exist edge disjoint escape paths for the unlinked terminals into distinct exit vertices of $A\cup B$ such that  $B\setminus A$ contains at most one exit.\qed
    \end{lemma}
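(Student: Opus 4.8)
The plan is to reduce the statement to a finite, structured routing check inside the single $3\times 3$ grid $Q$. Normalize coordinates so that $Q$ occupies rows and columns $1,2,3$, with $A=\{(3,1),(3,2),(3,3)\}$ the horizontal boundary, $B=\{(1,3),(2,3),(3,3)\}$ the vertical boundary, and $c=(3,3)$ their common corner, so that $A\cup B$ is an $L$-shape of five exit vertices and $B\setminus A=\{(1,3),(2,3)\}$. The four remaining vertices form the \emph{far block} $F=\{(1,1),(1,2),(2,1),(2,2)\}$, and a direct check shows that $F$ is joined to $A\cup B$ by exactly four edges: $(2,1)(3,1)$ and $(2,2)(3,2)$ into $A$, and $(1,2)(1,3)$ and $(2,2)(2,3)$ into $B\setminus A$; moreover $(1,1)$ has no crossing edge, so a terminal there must first step inside $F$. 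The first observation is that any terminal already lying on $A\cup B$ can serve as its own exit at no edge cost, so the substance of the lemma concerns only the terminals inside $F$, which must be evacuated across this four-edge bottleneck.

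Two counting facts organize the argument. First, since $\|Q\|=6$ and there are four pairs in all, at least two pairs lie entirely in $Q$ (if $p$ pairs are complete and $h$ have a single mate in $Q$, then $2p+h=6$ and $p+h\le 4$, so $p\ge 2$); thus a linkable pair always exists, and in fact we have at least two choices of which complete pair to link, a freedom we will exploit. Second, writing $a=\|F\|\in\{1,2,3,4\}$, the five $L$-shape vertices carry $6-a$ terminals and $3-(4-a)=a-1$ holes, so the cases are indexed cleanly by $a$. I would reserve one edge-disjoint path for a chosen complete pair, route every terminal of $F$ to a distinct exit, and let each $L$-shape terminal self-escape, insisting throughout that at most one escape path \emph{ends} in $B\setminus A$. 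It is important that the constraint restricts only the final exit vertices: a path may enter $(1,3)$ or $(2,3)$ and then turn down through $c$ to terminate in $A$, so a vertical crossing edge can still feed an $A$-exit.

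The decisive case is $a=4$, all of $F$ occupied. Here every crossing edge would be needed to evacuate $F$ entirely, and two of them discharge into $B\setminus A$; to honor the ``at most one'' requirement, at least one such path must be bent down through the corner to exit in $A$. This forces contention among the three edges incident to $c$, the edges $(3,2)(3,3)$ and $(2,3)(3,3)$, and the path reserved for the linked pair, while $(1,1)$ simultaneously demands an internal hop before it can cross. The \emph{main obstacle} is to show that a consistent, edge-disjoint assignment nonetheless exists. My approach is to choose the linked pair to relieve the congestion: if two $L$-shape terminals form a pair, link them using only $L$-shape edges, which keeps $F$, the crossing edges, and the corner free for the escapes; otherwise link a pair sitting inside $F$, which removes two $F$-terminals and frees a crossing edge. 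With one of these reductions in hand, the remaining escapes are routed by monotone ``staircase'' paths steered toward $A$, using $c$ at most once to absorb a $B\setminus A$ approach. I expect this to reduce to a short inspection of how the two $L$-shape terminals and the three holes are placed among the five $L$-vertices, completed with the explicit patterns.

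The cases $a\le 3$ are genuinely easier and I would dispatch them with the same patterns: fewer terminals cross the bottleneck, more exit slots and more crossing edges stay unused, and the reserved linkage path can be kept clear of the corner, so the $B\setminus A$ budget is met with room to spare. Because the asymmetric constraint is preserved by no nontrivial automorphism of $Q$ (the diagonal reflection swaps $A$ and $B$), the enumeration cannot be folded by symmetry and is carried out directly; but organized by $a$ and by the chosen linkage pair it remains small, with the $a=4$ analysis above as its core.
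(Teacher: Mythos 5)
Note first that the paper itself contains no proof of this lemma: it is restated from \cite{heavy} with a \qed, so there is no in-paper argument to compare against, and your proposal must stand on its own. It does not, because the strategy you give for the case you correctly identify as decisive ($a=4$) fails on a concrete configuration. Take all four vertices of $F$ as terminals, put the two $L$-shape terminals at $(3,1)$ and $(1,3)$, and let the pairing be $\pi_1=\{(3,1),(1,3)\}$, $\pi_2=\{(1,1),(2,2)\}$, with $(1,2)$ and $(2,1)$ paired to vertices outside $Q$ (so $p=2$, consistent with your counting). Your rule says: since two $L$-shape terminals form a pair, link them ``using only $L$-shape edges,'' which ``keeps $F$, the crossing edges, and the corner free.'' But the only path from $(3,1)$ to $(1,3)$ inside the $L$-shape is $(3,1)-(3,2)-(3,3)-(2,3)-(1,3)$, which uses \emph{all four} $L$-shape edges, in particular both edges incident to the corner $c=(3,3)$; the corner is not free. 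Under your plan the four terminals of $F$ are now all unlinked. Each of their escape paths must use at least one of the four crossing edges, and pairwise edge-disjointness forces each to use exactly one, after which the path can never re-enter $F$. The two paths crossing via $(1,2)(1,3)$ and $(2,2)(2,3)$ therefore arrive at $(1,3)$ and $(2,3)$ and are stuck there, since the edges $(1,3)(2,3)$ and $(2,3)(3,3)$ are consumed by the linkage: both exits lie in $B\setminus A$, violating the conclusion. The configuration is not a counterexample to the lemma --- it is rescued by linking a \emph{second} pair, namely $\pi_2$ inside $F$ (which must exist since $p\ge 2$), after which $(2,1)\to(3,1)$ and $(1,2)\to(1,3)$ finish the job --- but your if/else rule (link the $L$-pair if one exists, otherwise link an $F$-pair) never invokes that option, so the step as written is wrong, not merely incomplete.

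Two further points. First, your reduction ``the substance of the lemma concerns only the terminals inside $F$'' is overstated: when both vertices of $B\setminus A$ carry unlinked terminals they cannot both self-exit, so $L$-shape terminals may also need to be rerouted, and the cases $a\le 3$ you dismiss with ``room to spare'' contain tight configurations of exactly this kind (with one pair linked, the four escape exits must be all of $A$ plus at most one vertex of $B\setminus A$, with no slack). Second, even where the strategy is sound, the proposal is a plan rather than a proof: the ``short inspection'' of placements and the explicit staircase patterns are promised but never exhibited. The genuinely useful content --- the bound $p\ge 2$, the four-edge bottleneck between $F$ and $A\cup B$, the observation that escape paths may pass through $B\setminus A$ and exit in $A$ --- survives, but the argument needs a linking rule that depends on the pairing (and is prepared to link two pairs), followed by an actually completed finite check.
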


  \begin{lemma}
   \label{heavy5}
 If $\|Q\|=5$ and $\{s_1,t_1\}\subset Q$,
then there is
 an $s_1,t_1$-path  $P_1\subset Q$, 
 and the complement of $P_1$ contains edge disjoint escape paths for  the three unlinked terminals into distinct exit vertices of $A\cup B$ such that $B\setminus A$ contains at most one exit.\qed
   \end{lemma}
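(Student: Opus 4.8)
Write $u_1,u_2,u_3$ for the three terminals of $Q$ whose mates lie outside $Q$ (the unlinked terminals), and recall the geometry of the exits: $A$ and $B$ are two sides of the $3\times 3$ grid $Q$ meeting in the corner $A\cap B$, so $|A\cup B|=5$, $|A|=3$, and $B\setminus A$ consists of exactly two vertices. Hence ``at most one exit in $B\setminus A$'' is the same as ``at least two of the three escape paths end on $A$''. The plan is to fix a path $P_1$ first and then produce the escapes as an integral flow in the residual graph $Q-E(P_1)$, so that the only genuine work is a connectivity estimate together with a short check of a few extremal placements.

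For the choice of $P_1$ I would take a shortest $s_1,t_1$-path in $Q$; since $Q$ has $9$ vertices, $12$ edges, and diameter $4$, such a path uses at most four edges. Among shortest paths I would pick one that avoids, as far as the positions of $s_1,t_1$ permit, the edges of $A$ together with the vertical edges descending into $A$, so that every $u_i$ keeps an edge-disjoint route toward the side $A$. Two local requirements constrain this choice: $P_1$ must never use both edges at a degree-$2$ corner occupied by some $u_i$ (else that terminal is trapped), and it is preferable for $P_1$ not to pass through an occupied corner at all when an alternative shortest route exists. I note that the only nontrivial symmetry of $Q$ fixing the unordered pair $\{A,B\}$ is the reflection in the diagonal through $A\cap B$, and that reflection swaps $A$ and $B$; because the exit constraint singles out $A$, this symmetry is not directly usable, so the placements of $\{s_1,t_1\}$ must be organized by hand relative to $A$.

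With $P_1$ fixed, I would route the escapes by a flow argument. Build an auxiliary network on $Q-E(P_1)$ with unit edge capacities, a super-source $\sigma$ joined by unit-capacity edges to $u_1,u_2,u_3$, a super-sink $\tau$, a unit-capacity edge from each vertex of $A$ to $\tau$, and a single capacity-$1$ node through which the two vertices of $B\setminus A$ connect to $\tau$. An integral $\sigma$--$\tau$ flow of value $3$ decomposes into three edge-disjoint paths from the $u_i$ to three distinct exits, at most one of them on $B\setminus A$, and disjoint from $E(P_1)$; this is exactly the desired escape. By the max-flow--min-cut theorem it suffices to show that every $\sigma$--$\tau$ cut has capacity at least $3$, equivalently that after deleting $E(P_1)$ one cannot separate $\{u_1,u_2,u_3\}$ from the admissible exits by removing two further edges. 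This follows in the generic case from the fact that $Q$ is $2$-edge-connected with every non-corner vertex of degree at least $3$, while $P_1$ deletes at most four edges concentrated away from $A$: the residual graph then still carries three edge-disjoint routes into $A\cup B$.

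The main obstacle, and the reason the flow bound must be supplemented by an explicit check, is the coupling between $P_1$ and the escapes under the asymmetric exit rule: the edges a shortest $P_1$ wants are sometimes exactly the edges an escape needs, and the ``at most one on $B\setminus A$'' restriction forbids relieving the resulting congestion by pushing two paths out through the vertical side. The worst case is when one of $s_1,t_1$ together with all three of $u_1,u_2,u_3$ crowd the side $B$ and the corner $A\cap B$, forcing every escape across to $A$ through the interior while $P_1$ also needs the interior. For these few configurations I would route $P_1$ so as to keep an intact downward corridor to $A$ for each $u_i$, and then exhibit the three edge-disjoint escape paths directly, which completes the argument.
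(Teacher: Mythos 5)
You should first be aware that this paper contains no proof of Lemma~\ref{heavy5} to compare against: it is one of the three ``crowded quadrant'' lemmas ($\|Q\|\geq 5$) that Section~\ref{crowded} explicitly restates without proof, deferring to the companion manuscript \cite{heavy}; the lemmas actually proved here concern quadrants with at most four terminals. Your proposal therefore has to stand on its own, and as written it does not.

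The flow formulation itself is sound --- joining the two vertices of $B\setminus A$ to the sink through a single capacity-$1$ node correctly encodes ``distinct exits with at most one in $B\setminus A$,'' and integrality turns a value-$3$ flow into the required edge-disjoint escape paths --- but the two steps carrying the mathematical content are missing. First, the min-cut bound is asserted rather than proved: you claim the ``generic case'' follows because $Q$ is $2$-edge-connected with non-corner degrees at least $3$ while $P_1$ deletes at most four edges. Two-edge-connectivity yields two edge-disjoint paths, not three; deleting $E(P_1)$ can reduce corner degrees to one or zero; and no counting argument you give excludes a capacity-$2$ cut (note also that cuts in your network may mix source edges, graph edges, and sink edges, so your paraphrase ``two further edges'' is not yet the right cut condition). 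Establishing that bound uniformly over all placements of the five terminals, for a suitably chosen $P_1$, \emph{is} the lemma, so asserting it generically is circular. Second, the exceptional configurations, which you yourself identify as the crux (terminals crowding $B$ and the corner $A\cap B$), are neither enumerated nor settled: ``I would route $P_1$ so as to keep an intact downward corridor \dots and then exhibit the three edge-disjoint escape paths directly'' promises the case analysis instead of performing it. A further unforced weakness is the restriction of $P_1$ to shortest paths guided by heuristics (avoid $A$, avoid occupied corners) that can be unsatisfiable --- if $s_1,t_1$ are the two endpoints of $A$, the unique shortest path is $A$ itself --- and you never show that some path meeting your constraints always leaves a feasible flow; since the lemma permits an arbitrary $s_1,t_1$-path, this restriction buys nothing and would itself need justification. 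What remains after these gaps is precisely the finite but delicate case analysis that constitutes the actual proof.
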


\section{Escaping from sparse quadrants}
\label{light}
In the proof of the $4$-path pairability of $G\cong P_6\Box P_6$ in \cite{66} 
 we needed a sequence of technical lemmas to escape terminals  from a 
 $3\times 3$ quadrant $Q\subset G$. The lemmas for crowded quadrants are proved in \cite{heavy} and restated in Section \ref{crowded}. Here we state and prove the lemmas
 pertaining to sparse quadrants containing at most four terminals.

Let $G\cong P_6\Box P_6$, let $Q$ be a quadrant of $G$, and let $T\subset G$ be the union of four pairwise disjoint  terminal pairs 
$\{s_i,t_i\}$, $1\leq i\leq 4$.
 W.l.o.g. we may assume that $Q$ is the upper left quadrant of $G$, and thus
   $A=A(3)\cap Q$ and $B=B(3)\cap Q$ are the horizontal and vertical boundary lines, respectively, adjacent to neighbouring quadrants of $G$.
   
    For a vertex set $S\subset V(G)$, $H-S$ is interpreted as the subgraph obtained by the removal of $S$ and the incident edges from $H$;  $x\in H$ simply means a vertex of $H$. Mating (or shifting) a terminal $w$  to vertex $w^\prime$, called a mate of $w$, means specifying a   $w,w^\prime$-path called a {\it mating path}.  
 
 \subsection{Quadrants with two or three terminals}
Finding a linkage for two pairs are facilitated using the property of a graph being `weakly $2$-linked' defined in  \cite{T},
and by introducing the concept of a `frame' in \cite{66}. 

A graph $H$ is weakly $2$-linked, if for every $u_1,v_1,u_2,v_2\in H$, not necessarily distinct vertices, there exist edge disjoint $u_i,v_i$-paths in $H$, for $i=1,2$. A weakly $2$-linked graph must be $2$-connected, but $2$-connectivity is not a sufficient condition.
 The next lemma lists a few weakly $2$-linked subgrids (the simple proofs are omitted). 

\begin{lemma}
\label{w2linked}
The grid $P_3\Box P_k$,
and the subgrid of
$P_k\Box P_k$ induced by 
$(A(1)\cup A(2)) \cup$ $ (B(1)\cup B(2))$ is weakly $2$-linked, for $k\geq 3$.
 \qed
\end{lemma}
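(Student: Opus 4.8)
The plan is to treat the two subgrids separately, in each case producing the required edge-disjoint paths by an explicit construction, after first clearing away all degenerate configurations. Write $H$ for whichever of the two subgrids is under consideration, and let the four (not necessarily distinct) vertices be $u_1,v_1,u_2,v_2$. Both subgrids are $2$-connected, hence $2$-edge-connected, so between any two of their vertices there are even two edge-disjoint paths. This disposes at once of every case in which a pair is trivial ($u_i=v_i$, take the empty path) or in which the two demand pairs coincide or share a terminal: in those cases one needs at most a single pair of edge-disjoint paths joining two fixed vertices, which $2$-edge-connectivity supplies. Hence from now on I assume $u_1,v_1,u_2,v_2$ are four distinct vertices, and I split the argument according to whether the two demand pairs can be \emph{separated} -- i.e.\ whether there is a partition $V(H)=X\cup(V(H)\setminus X)$ with both induced subgraphs connected, $\{u_1,v_1\}\subseteq X$ and $\{u_2,v_2\}\subseteq V(H)\setminus X$. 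In the separated case one routes $\{u_1,v_1\}$ inside $H[X]$ and $\{u_2,v_2\}$ inside $H[V(H)\setminus X]$; these use disjoint edge sets and the cut edges are left idle, so the paths are edge-disjoint.

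For $P_3\Box P_k$ I would first use its reflective symmetries (swapping the rows $A(1)\leftrightarrow A(3)$ and reversing the column order) to normalize the terminal positions. The separated case above covers every placement in which a monotone staircase from $A(1)$ to $A(3)$ can be drawn with one pair on each side. The essential case is the \emph{crossing} regime, in which the terminals alternate along the strip and no such staircase exists. Here the central row $A(2)$ serves as a shared corridor: I route the two paths so that they meet only in vertices of $A(2)$ and share no edge, one path running along $A(2)$ and the other using the vertical edges incident to $A(2)$ together with the top and bottom rows. The $3\times3$ instance already displays the mechanism: the crossing demands $\{(1,1),(3,3)\}$ and $\{(3,1),(1,3)\}$ are realized by $(1,1)$--$(2,1)$--$(2,2)$--$(2,3)$--$(3,3)$ and $(3,1)$--$(3,2)$--$(2,2)$--$(1,2)$--$(1,3)$, which are edge-disjoint and meet only in the vertex $(2,2)$.

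For the L-shaped subgrid induced by $(A(1)\cup A(2))\cup(B(1)\cup B(2))$ the same separation/corridor dichotomy applies, with the role of the corridor now played by the $2\times2$ corner block together with the extra edges of the two arms. The point to keep in mind is that neither arm is by itself weakly $2$-linked: the ladder $P_2\Box P_k$ fails already on the interleaved demands $\{(1,1),(2,k)\}$ and $\{(1,k),(2,1)\}$, for a parity reason (adjoining the demand edges makes every vertex odd). Consequently, in each crossing case the proof must reroute one path through the additional vertical edges supplied by the corner -- exactly as, for $k=3$, the otherwise blocked demands $\{(1,1),(2,3)\}$, $\{(1,3),(2,1)\}$ become routable once one path detours $(1,3)$--$(1,2)$--$(2,2)$--$(3,2)$--$(3,1)$--$(2,1)$ through the corner. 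With the degenerate cases gone and the symmetries applied, only a bounded number of crossing patterns survive, and each is settled by an explicit edge-disjoint pair of paths as above.

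I expect the crossing regime to be the sole obstacle. Separated placements are immediate. In a crossing placement, by contrast, a naive routing tends to exhaust \emph{both} edges at one of the opponent's terminals (as the blocked attempts around $(3,1)$ and $(1,3)$ in the $3\times3$ case illustrate), so one must deliberately reserve, for each terminal, a corridor or corner edge by which its path enters. The ladder example shows that mere $2$-edge-connectivity of the relevant cuts does not suffice; what the proof must verify is the stronger fact that such a reserved detour is always simultaneously available for both paths. Once the corridors are reserved, checking edge-disjointness reduces to a short finite verification, which is why the lemma is genuinely elementary despite requiring this care.
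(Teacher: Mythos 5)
The paper offers no proof to compare against: Lemma \ref{w2linked} is stated with the remark that ``the simple proofs are omitted,'' so your attempt has to be judged on its own terms. Your architecture is sensible --- clear the degenerate placements, then split into a \emph{separated} regime (route each pair inside its own connected piece) and a \emph{crossing} regime handled by a corridor through $A(2)$, respectively through the $2\times 2$ corner block of the L-shaped graph --- and your two supporting observations are correct: the separated case is immediate, and the ladder $P_2\Box P_k$ genuinely fails weak $2$-linkage on interleaved demands, which is exactly why the L-shaped graph needs its corner.

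There are two gaps, one local and one decisive. The local one: when the demand pairs share exactly one terminal, say $u_1=u_2=u$ but $v_1\neq v_2$, what is needed is a pair of edge-disjoint paths from $u$ to the two \emph{distinct} vertices $v_1,v_2$ --- a fan, not ``a single pair of edge-disjoint paths joining two fixed vertices.'' This does still follow from $2$-edge-connectivity (add an auxiliary vertex $w$ adjacent to $v_1$ and $v_2$ and apply Menger's theorem between $u$ and $w$), but the reduction as you state it does not cover this case. The decisive gap: the crossing regime, which is the entire content of the lemma, is asserted rather than proved. You never define ``crossing'' (non-existence of a separating partition is a hypothesis to be exploited, not yet a normal form for the terminal positions), you never show that non-separability forces the alternating pattern your corridor construction presupposes, and the claim that ``only a bounded number of crossing patterns survive'' is unsubstantiated for general $k$, where the four terminals occupy arbitrary columns; one would need to argue that, after the symmetries, configurations fall into finitely many row/order types and then exhibit a column-index-parameterized routing for each (for instance, one path along $A(2)$, the other lifted over it at a reserved terminal-free column). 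Your two $3\times 3$ examples illustrate the mechanism but are not that argument, and for the L-shaped graph the case analysis --- including demands that straddle the two arms --- is not even sketched. What you have is a viable plan together with evidence that it works; the ``short finite verification'' you defer to is precisely the proof that is missing.
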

  Let $C\subset G$ be a cycle and let $x$ be a fixed vertex of $C$. Take two edge disjoint paths from a member
  of $\pi_j$ to $x$, for $j=1$ and $2$, not using edges of $C$. Then we say that
  the subgraph of the union of $C$ and the two paths to $x$ define a {\it frame} $[C,x]$, for $\pi_1,\pi_2$. A frame
  $[C,x]$, for $\pi_1,\pi_2$,  helps find a linkage
  for the pairs $\pi_1$ and $\pi_2$; in fact, it is enough to mate the other members of the terminal pairs onto $C$  using mating paths edge disjoint from $[C,x]$ and each other.
  
  The concept of a frame was introduced in \cite{66} to facilitate `communication' between quadrants of $G$. For this purpose frames in $G$ can be built on two standard cycles $C_0, C_1\subset G$ as follows.
  
  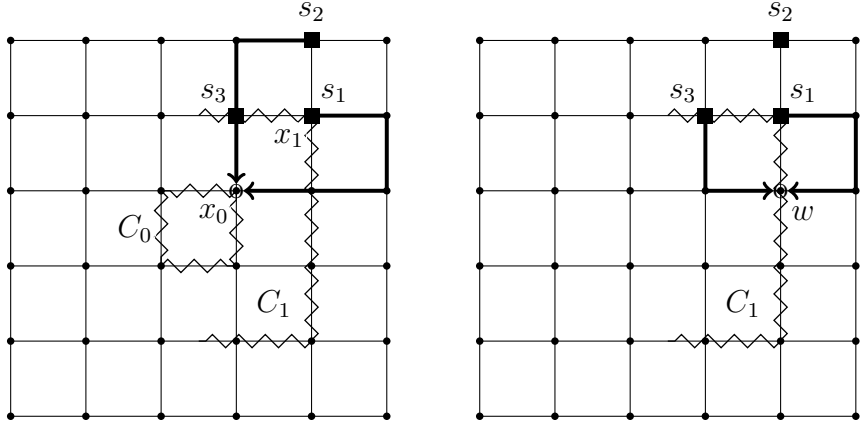
\begin{figure}[htp]
  \begin{center}
      \tikzstyle{T} = [rectangle, minimum width=.1pt, fill, inner sep=3pt]
 \tikzstyle{V} = [circle, minimum width=1pt, fill, inner sep=1pt]
\tikzstyle{B} = [rectangle, draw=black!, minimum width=4pt, inner sep=4pt]
\tikzstyle{txt}  = [circle, minimum width=1pt, draw=white, inner sep=0pt]
\tikzstyle{Wedge} = [draw,line width=1.5pt,-,black!100]

\begin{tikzpicture}
 \foreach \x in {0,1,2,3,4,5} 
    \foreach \y in {0,1,2,3,4,5} 
      { \node[V] () at (\x,\y) {};}
      \foreach \u in {0,1,2,3,4} 
    \foreach \v in {1,2,3,4,5} 
   { \draw (\u,\v) -- (\u+1,\v); \draw (\u,\v-1) -- (\u,\v); } 
  \draw (0,0) -- (5,0) -- (5,5);	
     \draw[line width=.5pt,snake=zigzag]  (2,2) -- (3,2) -- (3,3) -- (2,3) -- (2,2);
      \draw[Wedge,->](4,4)--(5,4) -- (5,3) -- (3.1,3);
     \draw[Wedge,->] (4,5) -- (3,5) -- (3,3.1)  ;
     \draw[line width=.5pt,snake=zigzag]  (2.5,4) -- (4,4) -- (4,1) -- (2.5,1);
     
     \node[T](s1) at (4,4){};
      \node[txt] () at   (4.3,4.3){$s_1$}; 
         \node[T,label=above:$s_2$]() at (4,5){};
           \node[T]() at (3,4){};
           \node[txt] () at   (2.7,4.3){$s_3$}; 
           \node[]() at (3,3){o}; 
       \node[txt] () at   (2.7,2.7){$x_0$}; 
          \node[txt] () at   (3.7,3.7){$x_1$}; 
        \node[txt] () at   (1.65,2.5){$C_0$};  
           \node[txt] () at   (3.5,1.5){$C_1$};        
\end{tikzpicture}
\hskip1truecm
\begin{tikzpicture}
 \foreach \x in {0,1,2,3,4,5} 
    \foreach \y in {0,1,2,3,4,5} 
      { \node[V] () at (\x,\y) {};}
      \foreach \u in {0,1,2,3,4} 
    \foreach \v in {1,2,3,4,5} 
   { \draw (\u,\v) -- (\u+1,\v); \draw (\u,\v-1) -- (\u,\v); } 
  \draw (0,0) -- (5,0) -- (5,5);	
 
      \draw[Wedge,->](4,4) -- (5,4) -- (5,3) -- (4.1,3);
     \draw[Wedge,->]  (3,4) -- (3,3) -- (3.9,3) ;
     \draw[line width=.5pt,snake=zigzag]  (2.5,4) -- (4,4) -- (4,1) -- (2.5,1);
     
        \node[T](s1) at (4,4){};
      \node[txt] () at   (4.3,4.3){$s_1$}; 
      \node[T,label=above:$s_2$]() at (4,5){}; 
         \node[T]() at (3,4){};
           \node[txt] () at   (2.7,4.3){$s_3$}; 
          \node[]() at (4,3){o};  
      \node[txt] () at   (4.3,2.7){$w$};  
       \node[txt] () at   (3.5,1.5){$C_1$};  
\end{tikzpicture}
\end{center}
\caption{Framing $[C_0,x_0]$ for $\pi_1,\pi_2$, and $[C_1,w]$, for $\pi_1,\pi_3$}
\label{frames}
		\end{figure}
 Let $C_0$ be the innermost $4$-cycle of $G$  induced by $(A(3)\cup A(4))\cap (B(3)\cup B(4))$, and let $C_1$ be the $12$-cycle around $C_0$ induced by the neighbors of $C_0$.
 Given a quadrant $Q$ we usually set $x_0=Q\cap C_0$ and we denote by $x_1$ the middle vertex of the path $Q\cap C_1$.  (For instance, in the upper right quadrant of $G$, $x_0=(3,4)$ and $x_1=(2,5)$.)
 
 Let  $\alpha\in\{0,1\}$ be fixed, assume that there are two terminals in a quadrant $Q$ belonging to distinct pairs, say $s_1\in\pi_1$, $s_2\in \pi_2$,  and  let $w\in Q\cap C_\alpha$. We say that $[C_\alpha,w]$ is a {\it framing  in $Q$ for $\pi_1,\pi_2$ to $C_\alpha$}, if 
 there exist edge disjoint mating paths in $Q$
 from $s_1$ and from $s_2$ to $w$,  edge disjoint from $C_1$
 (see examples in Fig.\ref{frames} for framing in the upper right quadrant).
 
    \begin{lemma}
 \label{frame}  
 Let  $s_1\in\pi_1,s_2\in\pi_2$ be two (not necessarily distinct) terminals/mates  in a quadrant $Q$.
 
 (i) For any mapping $\gamma:\{s_1,s_2\}\longrightarrow\{C_0,C_1\}$,
 there exist edge disjoint  mating paths in $Q$ from $s_j$ to vertex $s_j^\prime\in\gamma(s_j)$, $j=1,2$, not using edges of $C_1$.
 
 (ii)  For any fixed $\alpha\in\{0,1\}$, there is a framing 
 $[C_\alpha,x_\alpha]$, 
 for $\pi_1, \pi_2$, where $x_\alpha\in C_\alpha\cap Q$ and the mating paths are in $Q$. 
  \end{lemma}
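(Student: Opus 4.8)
The plan is to reduce both parts to a pair of edge-disjoint routing problems inside $Q$ and to carry them out directly, the essential point being that all target vertices are confined to the inner corner of $Q$.

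First I would fix coordinates in $Q\cong P_3\Box P_3$: write $o$ for the outer corner (the one not on $A\cup B$), $x_0$ for the inner corner $C_0\cap Q$ (the corner on $A\cap B$), and $x_1$ for the centre. The only two edges of $C_1$ lying in $Q$ are the edges joining $x_1$ to its two neighbours on $A$ and on $B$, that is, to the two neighbours of $x_0$; call these the \emph{forbidden} edges and put $Q^-=Q$ with the forbidden edges deleted. Since every mating path lies in $Q$, requiring it to avoid the edges of $C_1$ is exactly requiring it to lie in $Q^-$. In $Q^-$ the centre $x_1$ retains only its two edges toward $o$, so $Q^-$ is the outer $8$-cycle $C$ of $Q$ together with an ``ear'' attaching $x_1$ to the two neighbours of $o$ on $C$. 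The admissible targets are $x_0$ (when $\gamma(s_j)=C_0$) and the three vertices $\{x_1\}\cup N(x_0)$ (when $\gamma(s_j)=C_1$), all of which sit in the inner $2\times2$ block.

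In this language part (i) asks for edge-disjoint $s_1$--$s_1'$ and $s_2$--$s_2'$ paths in $Q^-$ with each $s_j'$ an admissible target, and part (ii) is the special case $s_1'=s_2'=x_\alpha$. I would stress at the outset what makes this delicate: one cannot simply quote weak $2$-linkage, because $Q^-$ is \emph{not} weakly $2$-linked. Indeed the two demands $\{o,x_0\}$ and $\{\text{the remaining two corners of }Q\}$ have no edge-disjoint routing in $Q^-$, since deleting those two corners disconnects $o$ from $x_0$, so any $o$--$x_0$ path must run through and block one of them. The whole content of the lemma is that the admissible targets avoid precisely these bad outer corners, so the obstruction never arises; this is the step I expect to be the crux.

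The routing then rests on two reusable devices. First, $x_0$ has degree $2$ in $Q^-$, its neighbours being the two ends of the path $Z:=C-x_0$; hence sending two edge-disjoint paths to $x_0$ is the same as sending $s_1$ and $s_2$ to the two ends of $Z$, which is immediate (order the sources along $Z$ and send each to the nearer end; the ear absorbs a source at $x_1$ and the coincident case $s_1=s_2$). This settles part (i) when both targets are $C_0$, hence part (ii) for $\alpha=0$. Second, $x_1$ has degree $2$ in $Q^-$ with both edges into $C$, so routing both sources to $x_1$ reduces to routing $s_1,s_2$ to the two neighbours of $o$ inside the cycle $C=Q^{-}-x_1$; since any two sources and two targets on a cycle admit a non-crossing matching pairing sources to targets, we obtain edge-disjoint arcs, settling part (ii) for $\alpha=1$. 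The remaining cases of part (i) --- both targets on $C_1$, or one on each --- combine these two devices, using the freedom to pick the exact $C_1$-target among $\{x_1\}\cup N(x_0)$ to avoid conflicts, and the reflection of $Q$ across the diagonal through $o,x_1,x_0$ (which fixes these three and swaps $A,B$) halves the work. The only genuine obstacle is the failure of weak $2$-linkage noted above, which forces the argument to exploit the localized target set rather than a black-box linkage result; once reduced to ``reach the two ends of a path'' and ``non-crossing matching on a cycle'' the remaining verifications are routine, and in any event bounded, as $Q$ has only nine vertices.
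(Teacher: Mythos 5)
Your proposal is correct---every routing claim checks out on the nine-vertex quadrant, including the degenerate positions---but it is organized differently from the paper's proof, which obtains everything from a single construction. The paper lets $x_2$ denote the degree-two corner of $Q$ (your $o$), picks a terminal-free vertex $x_i\in\{x_1,x_2\}$, and takes $D$ to be the Hamiltonian cycle of $Q-x_i$, or a Hamiltonian path of $Q$ from $x_1$ to $x_2$ when $\{x_1,x_2\}=\{s_1,s_2\}$; such a $D$ avoids the edges of $C_1$, and the $s_1,s_2$-path $P\subset D$ through $x_0$ settles all claims at once: splitting $P$ at $x_0$ gives the framing $[C_0,x_0]$; stopping each branch at a neighbour of $x_0$ (these lie on $C_1$) gives claim (i); and letting one branch run on through $x_0$ to a $C_1$-vertex $w$ on the other side gives a framing $[C_1,w]$. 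Your first device (routing $s_1,s_2$ to the two ends of $Z=C-x_0$) is precisely this $P$ split at $x_0$, and your reduction to the cycle-plus-ear graph $Q^-$ is the structural fact that makes $D$ exist, so the two proofs build essentially the same objects. The genuine differences are these: the Hamiltonian trick absorbs all degenerate positions of $s_1,s_2$ (centre, outer corner, coincident terminals) into the three-way choice of $D$, whereas you must handle them by hand inside each device, which is why your write-up is longer; your second device proves a slightly stronger form of (ii) for $\alpha=1$, namely a framing at the centre vertex $x_1$ itself, while the paper's proof settles for a framing at some $w\in C_1\cap Q$ (a neighbour of $x_0$), which is all the statement's clause ``where $x_\alpha\in C_\alpha\cap Q$'' demands; and your observation that $Q^-$ is \emph{not} weakly $2$-linked---the reason Lemma \ref{w2linked} cannot simply be invoked---does not appear in the paper and is a worthwhile remark. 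One small repair to that remark: a path joining $o$ to $x_0$ blocks the other demand because the corner it passes through has degree two in $Q^-$ (so the path consumes both of its edges), not merely because deleting the two corners disconnects $o$ from $x_0$; with that noted, your counterexample is valid.
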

\begin{proof}
Let $x_2\in Q$ be the corner vertex of the quadrant that has degree $2$ in $G$. If $x_i$ is terminal free for  $i=1$ or $2$, then let $D$ be the Hamiltonian cycle of $Q-x_i$. If $\{x_1,x_2\}=\{s_1,s_2\}$,
then let $D$ be a Hamiltonian path of $Q$ from $x_1$ to $x_2$.
Observe that $D$ does not use edges of $C_1$.
Let $P\subset D$ be the $s_1,s_2$-path of $D$ that contains $x_0$. Thus
$P$ defines a framing for $\pi_1,\pi_2$ with $C_0$.
Since $P$ has a vertex $w\in C_1$, 
 we obtain the required framing  $[C_1,w]$.
 Since the two neighbors of $x_0$ are in $C_1$ claim (i) also follows.
\end{proof}

\begin{lemma}
\label{12toCa} Let $s_p,s_q,s_r$  be distinct terminals in a quadrant $Q$ belonging to three distinct pairs. Then there is a framing in $Q$ for $\pi_p,\pi_q$ to $C_\alpha$, for some $\alpha\in\{0,1\}$, and there is an edge disjoint mating path in $Q$ from $s_r$ to $C_\beta$, 
where $\beta=\alpha+1\pmod 2$, and edge disjoint from $C_1$.
\end{lemma}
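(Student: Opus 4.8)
The plan is to reduce everything to the framing machinery of Lemma~\ref{frame}, together with a short direct routing of the third terminal through the centre of $Q$. Fix local coordinates so that $Q$ occupies rows and columns $1,2,3$, with inner corner $x_0=(3,3)$, centre $x_1=(2,2)$, and outer (degree-two) corner $x_2=(1,1)$. Then $C_0\cap Q=\{x_0\}$ and $C_1\cap Q=\{x_1,(2,3),(3,2)\}$, and the only edges of $C_1$ lying inside $Q$ are the two spokes $x_1(2,3)$ and $x_1(3,2)$; hence ``edge disjoint from $C_1$'' means exactly avoiding these two spokes, while the spokes $x_1(1,2)$, $x_1(2,1)$ and all boundary edges of $Q$ remain available. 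I would also use the diagonal reflection $(i,j)\mapsto(j,i)$, which fixes $x_0,x_1,x_2$, swaps $(2,3)\leftrightarrow(3,2)$, and preserves $C_0,C_1$ and the forbidden edge set, to halve the configurations.

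First I would dispose of the case $s_r\in C_0\cup C_1$, i.e. $s_r\in\{x_0,x_1,(2,3),(3,2)\}$. Here the required mating path from $s_r$ to the cycle containing it is the empty path, so I set $\beta$ to that index, set $\alpha=\beta+1\pmod 2$, and apply Lemma~\ref{frame}(ii) to obtain a framing of $\pi_p,\pi_q$ to $C_\alpha$ with mating paths in $Q$ avoiding $C_1$. Since the $s_r$-path carries no edges, edge disjointness is automatic.

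For the remaining case, with $s_r$ at an outer vertex of $\{x_2,(1,2),(1,3),(2,1),(3,1)\}$ (three of these up to symmetry), the idea is to frame $\pi_p,\pi_q$ along a Hamiltonian structure and to let $s_r$ escape through the centre. Exactly as in the proof of Lemma~\ref{frame}, since only three terminals lie in $Q$ at least one of $x_1,x_2$ is terminal-free, so there is a Hamiltonian cycle $D$ of $Q$ minus that vertex which avoids the two forbidden spokes and passes through both $x_0$ and a vertex of $C_1$. Framing $\pi_p,\pi_q$ to $C_0$ uses only the sub-arc $P\subset D$ joining $s_p$ to $s_q$ through $x_0$ (the two halves $s_p\to x_0$ and $s_q\to x_0$ meet only at $x_0$), so the complementary arc $D\setminus P$, together with the usable spokes $x_1(1,2),x_1(2,1)$, is left free. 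When $s_r$ lies on this complementary arc I route it along the arc to whichever of $(1,2),(2,1)$ it reaches and then across the corresponding spoke into $x_1\in C_1$, giving $\beta=1$; the used spoke is neither a boundary edge nor a forbidden edge, so the three paths stay edge disjoint.

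The main obstacle is the case in which $s_r$ is trapped on the framing arc $P$ with both of its incident boundary edges consumed by the framing, most acutely when the escaping terminal sits at the degree-two corner $x_2$ while its two neighbours $(1,2),(2,1)$ carry $s_p,s_q$: then the two branches of any framing to a common hub interleave, around the $4$-cycle $x_2,(1,2),x_1,(2,1)$, with the only edges along which $s_r$ could leave the corner, and this interleaving is a genuine linkage obstruction rather than an accident of a bad choice. I expect to clear it by exhausting the remaining freedom: switching to $\alpha=1$ and framing $\pi_p,\pi_q$ to $C_1$ at the hub $(2,3)$ or $(3,2)$ while escaping $s_r$ to $x_0\in C_0$ down the opposite boundary; choosing which of $x_1,x_2$ is deleted from $D$; invoking the diagonal symmetry; and, in the most clustered placements, using the flexibility (as the lemma is applied) in which terminal plays the escaping role $s_r$, so that the corner-trapped terminal is one of the framed pair rather than the escapee. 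The bulk of the work, and the only genuinely delicate point, is verifying that across these choices of $\alpha$, of the hub $w\in C_1\cap Q$, and of the deleted vertex, every admissible placement yields one valid combination; this is where a short but careful finite case check is unavoidable.
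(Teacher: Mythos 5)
Your opening reduction is sound: when $s_r\in C_0\cup C_1$ the empty mating path plus Lemma~\ref{frame}(ii) applied to $\pi_p,\pi_q$ with the complementary $\alpha$ settles that case, and edge disjointness is indeed automatic. The gap lies in everything after it, and it begins with a misreading of the statement. In this lemma the roles are part of the hypothesis: $s_p,s_q,s_r$ are given, the framing must be for $\pi_p,\pi_q$, and the mating path must start at $s_r$; the only existential freedom is $\alpha$. (Contrast Lemma~\ref{Caforpq}, where the indices $p,q$ are existentially quantified.) So the fallback you reserve for the clustered placements --- ``using the flexibility (as the lemma is applied) in which terminal plays the escaping role $s_r$'' --- is not available; a proof that re-assigns the escaping role proves a different, weaker statement. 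Moreover, the configuration that pushed you to this move, which you call ``a genuine linkage obstruction,'' is not an obstruction at all: with $s_r=(1,1)$, $s_p=(1,2)$, $s_q=(2,1)$, frame $\pi_p,\pi_q$ to $C_0$ by the boundary paths $(1,2)-(1,3)-(2,3)-(3,3)$ and $(2,1)-(3,1)-(3,2)-(3,3)$, and let $s_r$ exit by $(1,1)-(2,1)-(2,2)$; these are pairwise edge disjoint and avoid both edges of $C_1$ inside $Q$. Here $s_r$ sits on the arc complementary to the framing arc, so its corner edges are not ``consumed''; the interleaving you describe never occurs.

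The deeper missing idea is the pivot that makes the fixed-role statement true when $s_r$ \emph{does} lie on the $s_p,s_q$-arc $P$ through $x_0$ (e.g.\ $s_p=(1,2)$, $s_q=(1,3)$, $s_r=(2,1)$). Your sketch only ever frames to $C_0$ along the Hamiltonian structure and escapes $s_r$ across a spoke into $x_1$, so this case is exactly what your deferred ``finite case check'' would have to supply --- and that check is the substance of the lemma, not a routine verification. The paper resolves it by switching $\beta$ to $0$: give $s_r$ the $s_r,x_0$-subpath of $P$, and frame $\pi_p,\pi_q$ to $C_1$ using the complementary arc, which either meets $C_1$ at a vertex $w$ (yielding $[C_1,w]$) or permits edge disjoint matings of both $s_p,s_q$ to $x_1$. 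Finally, your case analysis cannot start from the claim that ``at least one of $x_1,x_2$ is terminal-free'': with three terminals this fails (take $s_p=x_1$ and $s_r=x_2$, which is inside your ``remaining case''), and the analogous degeneracy in Lemma~\ref{frame} is handled there by a Hamiltonian path from $x_1$ to $x_2$, a device your sketch omits. As written, the proposal proves the easy case, asserts a false obstruction and a false structural claim, and leaves the genuinely needed construction to an unperformed check whose proposed tools include one the statement forbids.
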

\begin{proof} 
W.l.o.g. assume that $p=1,q=2,r=3$. 
First assume that all terminals lie on the boundary cycle $C$ of $Q$. 
Let $P\subset C$ be the $s_1,s_2$-path along $C$ through $x_0$.
If  $s_3\notin P$ then there is always a path from $s_3$ 
to $x_1$ which is edge disjoint from $P$, furthermore,
$P$ defines a framing  for $\pi_1,\pi_2$ to $C_0$. If $s_3\in P$, then let 
$R$  be the $s_3,x_0$-path in $P$, and let $\overline{P}\subset C$ be the $s_1,s_2$-path
edge disjoint from $R$. Then either $\overline{P}$ intersects $C_1$ at some vertex $w$, or there are edge disjoint mating paths from both $s_1,s_2$ to $x_1$, in each case defining a framing for $\pi_1,\pi_2$ to $C_1$.

Assume now that $x_1$ is a terminal. 
Let $x_1^\prime\in C$ be the corner vertex of $Q$
with degree two in $G$. 
If $\{x_1,x_1^\prime\}=\{s_1,s_2\}$, then a shortest $s_1,s_2$-path $R$ and 
an $s_3,x_0$-path disjoint from $R$ yield
a framing $[C_1,x_1]$ for $\pi_1,\pi_2$ and an edge disjoint mating of
$s_3 $ to $C_0$.
If $s_3\in\{x_1,x_1^\prime\}$ then let $C$ be the hamiltonian cycle of $Q-s_3$. Now we define $P\subset C$ to be the $s_1,s_2$-path along $C$ through $x_0$, and the the claim follows. 
 \end{proof} 

\begin{lemma}
\label{Caforpq}
 Let $s_1,s_2,s_3$  be distinct  terminals in a quadrant $Q$ (belonging to distinct pairs); 
let $y_0\in Q$ be a corner vertex of $Q$ with degree three in $G$, and 
let  $z\in \{x_0,y_0\}$ be a fixed corner vertex of $Q$.  Then,

(i)  for some $1\leq p<q\leq 3$, there is a framing in $Q$ for $\pi_p,\pi_q$ to $C_0$,
 and there is an edge disjoint mating path in $Q$ from the third terminal to $C_1$;
 
(ii)  for some $1\leq p<q\leq 3$, there is a framing in $Q$ for $\pi_p,\pi_q$ to $C_1$,
 and there is an edge disjoint mating path in $Q$ from the third terminal to $z$;

\end{lemma}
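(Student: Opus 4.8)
The force of this lemma, compared with Lemma~\ref{12toCa}, is that the latter only frames some pair to $C_\alpha$ and mates the third to $C_{1-\alpha}$ for \emph{some} $\alpha$, whereas here we get to prescribe $\alpha$: part (i) is the case $\alpha=0$ and part (ii) the case $\alpha=1$, the latter with the added freedom of aiming the third terminal at a chosen corner. Accordingly, the plan is to argue everything inside the graph $Q'$ obtained from $Q$ by deleting its only two edges lying on $C_1$, namely $(2,2)(2,3)$ and $(2,2)(3,2)$; since every path must be edge disjoint from $C_1$, working in $Q'$ makes that requirement automatic. In $Q'$ the centre $x_1=(2,2)$ keeps only its edges to $(1,2)$ and $(2,1)$, so $Q'$ is exactly the boundary $8$-cycle of $Q$ together with two spokes joining $x_1$ to $(1,2)$ and to $(2,1)$. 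I label the boundary cyclically $v_0=x_0=(3,3)$, $v_1=(2,3)$, $v_2=(1,3)$, $v_3=(1,2)$, $v_4=(1,1)$, $v_5=(2,1)$, $v_6=(3,1)$, $v_7=(3,2)$; thus $C_0\cap Q=\{v_0\}$, $C_1\cap Q=\{x_1,v_1,v_7\}$, the spoke feet are $v_3,v_5$, and the corners eligible to be $y_0$ are $v_2,v_6$.

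For (i) I would first dispose of the case that $x_1$ is a terminal: then $x_1$ already lies on $C_1$, so I take it as the third terminal (trivial mating path) and frame the other two to $v_0$ along the two arcs of the boundary reaching $v_0$ through $v_1$ and through $v_7$, which are edge disjoint and use no spoke. Otherwise the three terminals occupy positions $v_{i_1},v_{i_2},v_{i_3}$ with $1\le i_1<i_2<i_3\le 7$. I would frame the extreme terminals $v_{i_1},v_{i_3}$ to $v_0$ along the descending arc through $v_1$ and the ascending arc through $v_7$; these are edge disjoint and leave the whole arc $v_{i_1}\cdots v_{i_3}$ free, so the middle terminal $v_{i_2}$ escapes to $x_1$ by running inside that free arc to a spoke foot and taking the spoke. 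The only point to verify is that the interval $[i_1,i_3]$, having at least three elements, contains $3$ or $5$; this holds because every block of three consecutive positions among $1,\dots,7$ meets $\{3,5\}$, so a usable spoke is always present and (i) follows.

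For (ii) I would use the diagonal reflection of $Q$ through $v_0$ and $v_4$, which fixes $x_0,x_1$ and interchanges $v_2\leftrightarrow v_6$, $v_1\leftrightarrow v_7$, $v_3\leftrightarrow v_5$, to reduce to the two cases $z=x_0$ and $z=v_6$. In each case the plan is to route the third terminal to $z$ first, along a boundary path $R$ that, when the terminals cluster on the arc opposite $z$, runs the \emph{long way} through $x_0$, and then to frame the remaining two terminals to a suitable vertex $w\in\{x_1,v_1,v_7\}$ of $C_1$ inside $Q'-E(R)$. The choice of $w$ is dictated by where the framed pair sits: I would frame inward to $x_1$ through the two spokes when the pair straddles the feet $v_3,v_5$, and frame to the boundary exit $v_1$ or $v_7$ when the pair lies near $x_0$. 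As in the proof of Lemma~\ref{frame}, the routing can be organised along the two Hamiltonian cycles of $Q'$ available here, the boundary cycle itself and the cycle $v_0v_1v_2v_3\,x_1\,v_5v_6v_7$ of $Q-v_4$, the latter being convenient because it already meets all of $C_1\cap Q$, both corners $v_2,v_6$, and $x_0$.

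The genuinely delicate part is (ii). Unlike in (i), where the third terminal need only reach the three-vertex set $C_1\cap Q$, here it must be delivered to one prescribed corner while the other two are simultaneously framed to $C_1$, and in the configurations where all three terminals bunch on the arc opposite $z$ the third terminal's path and the two framing paths compete for the few boundary edges near $z$. No single uniform choice of framing target succeeds: the resolution is to trade off the three options for $w$ against the direction in which the third terminal is sent to $z$ (short way versus long way through $x_0$), framing inward through the spokes whenever the boundary near $z$ is congested. Verifying that at least one such combination is always available — essentially a finite check over the placements of three terminals on the nine vertices of $Q$, cut down by the diagonal symmetry and by the reductions above — is the crux, and is where I expect the bulk of the casework to lie.
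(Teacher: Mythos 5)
Your part (i) is correct and is in substance the paper's own (very short) argument: frame the two extreme terminals to $x_0$ along the two boundary arcs through $v_1$ and $v_7$, and send the middle terminal to $x_1$ through a spoke; your observation that every three consecutive positions among $1,\dots,7$ meet $\{3,5\}$ settles it. (One harmless case escapes your enumeration $1\le i_1<i_2<i_3\le 7$: a terminal sitting at $v_0=x_0$ itself; it is trivially handled by framing it with the empty path, but it should be stated.)

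Part (ii), however, is not proved. What you offer is a search strategy, and you say so yourself: ``verifying that at least one such combination is always available \dots\ is the crux, and is where I expect the bulk of the casework to lie.'' That verification \emph{is} the lemma; deferring it leaves (ii) entirely open, and the difficulty you defer is genuine. Concretely, your plan never specifies the rule for choosing \emph{which} terminal is routed to $z$, and a wrong choice can be unrepairable inside your graph $Q'$: take terminals at $v_1=(2,3)$, $v_7=(3,2)$, $x_1$, and $z=y_0=v_6$. If you route the $v_1$-terminal to $z$, then every framing of the pair $\{v_7,x_1\}$ to any $w\in\{x_1,v_1,v_7\}$ inside $Q'$ consumes either both $Q'$-edges at $v_1$ or both edges at $v_6$ (check the few routes: $v_1$, $v_7$ and $x_1$ all have degree two in $Q'$), so the mating path to $z$ is blocked no matter how $w$ or the direction of $R$ is varied; routing the $v_7$-terminal to $z$ works immediately. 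So the existence of a good selection in every configuration is exactly what must be demonstrated. The paper does this with two devices your sketch lacks: explicit selection rules for $s_3$ (when $x_1$ is a terminal, an orientation rule on the path $P\subset C$ through $x_0$; when all terminals lie on $C$, take $s_3\in B(3)$ if all terminals lie in $A(1)\cup B(3)$ and $s_3\notin A(1)\cup B(3)$ otherwise), and the choice of $R\subset C$ as the \emph{smallest subpath containing $s_3$, $x_0$ and $y_0$ simultaneously}, so that a single construction mates $s_3$ to either corner and your case split on $z$ disappears. With these choices only three cases remain ($x_1$ a terminal; $x_0$ a terminal, where the $s_1,s_2$-path off $R$ meets $C_1$ at the framing vertex $w$; neither, where $s_1,s_2$ are framed to $x_1$), each settled by a short explicit routing as in Fig.~\ref{onC}. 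Until you supply selection rules of this kind and check the resulting configurations, part (ii) stands unproven.
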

\begin{proof}
Claim (i) immediately follows by considering a shortest path $P$ through $x_0$ along the boundary cycle $C$ of $Q$, say between terminals $s_p$ and $s_q$. Since the third terminal  $s_r$ is not in $P$ it  is easy to find an edge disjoint mating path from $s_r$ into $C_1$.

In the proof of claim (ii) w.l.o.g. we assume that $Q$ is the upper left quadrant of $G$, and $y_0=(3,1)$. (Recall that in this case $x_0=(3,3)$ and $x_1=(2,2)$.) In order to simplify finding the appropriate indices $p$ and $q$ we are allowing to relabel the terminals during the proof; as a result we conclude with $p=1,q=2$, that is the existence of a required framing for $\pi_1,\pi_2$ to $C_1$
together with the mating of $s_3$ to $z$.
Let $C$ be the boundary cycle of $Q$ oriented counterclockwise.
 
If $x_1$ is a terminal, then set $s_2=x_1$, and take the path $P\subset C$ through $x_0$ between the other two terminals. 
Considering the counterclockwise orientation of $P$,
if its starting vertex  belongs to $B(3)$, then label it with $s_1$, otherwise, label it with $s_3$. Let $R\subset C$ be the smallest path containing $s_3,x_0,y_0$ dedicated to mate $s_3$ into $x_0$ or $y_0$.
In each case there is an $s_1,s_2$-path edge disjoint from $R$ leading to the  framing $[C_1,x_1]$ for $\pi_1,\pi_2$, and yielding the mating 
path in $R$ from $s_3$ to $z$ (see in Fig.\ref{twoframes}).

	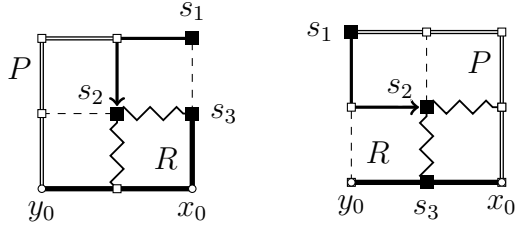
\begin{figure}[htp]
  \tikzstyle{T} = [rectangle, minimum width=.1pt, fill, inner sep=2.5pt]
\tikzstyle{B} = [rectangle, draw=black!, minimum width=1pt, fill=white, inner sep=1.5pt]
\tikzstyle{txt}  = [circle, minimum width=1pt, draw=white, inner sep=0pt]
\tikzstyle{Wedge} = [draw,line width=1.5pt,-,black!100]
\tikzstyle{M} = [circle, draw=black!, minimum width=1pt, fill=white, inner sep=1pt]
\begin{center}
\begin{tikzpicture}

 \draw[dashed] (2,1) -- (2,2) (0,1) -- (1,1);
   \draw[line width=2pt]  (0,0) -- (1,0) -- (2,0) -- (2,1);
   \draw[double,line width=.5pt]       (0,0) --  (0,1) -- (0,2)-- (1,2); 
   \draw[->,line width=1.2pt] (2,2) -- (1,2) -- (1,1.1);
  
    \draw[line width=.7,snake=zigzag]   (1,0) -- (1,1) -- (2,1);  
    
   \node[M]() at (2,0){};     
   \node[B]() at (0,2){};      
   \node[B]() at (1,2){};    
   \node[B]() at (0,1){};   
    \node[B]() at (1,0){};  
    \node[M]() at (0,0){};              
     \node[T]() at (1,1){};                
      \node[T] (x1) at (1,1) {}; 
      
                 \node[T,label=above:$s_1$](s1) at (2,2){};                  
                 \node[T,label=right:$s_3$](s3) at (2,1){};
                 \node[txt] () at (.65,1.25) {$s_2$};    
            
      \node[txt] () at (0,-0.3) {$y_0$};  
      \node[txt] () at (2,-0.3) {$x_0$};  
        \node[txt] () at (1.65,0.4) {$R$}; 
          \node[txt] () at (-.3,1.6) {$P$}; 
          
     	\end{tikzpicture}
	  \hskip.5truecm	
		  \begin{tikzpicture}
     \draw[dashed]  (1,2) -- (1,1) (0,0)--(0,1);
   
    \draw[->,line width=1.2pt] (0,2) -- (0,1)-- (.9,1);
    \draw[line width=.7,snake=zigzag]   (1,0) -- (1,1) -- (2,1);
    
  \draw[double,line width=.5pt]  (1,0)--(2,0) -- (2,2)--(0,2);
   \draw[line width=2pt]  (0,0) -- (1,0) -- (2,0) ; 
  \foreach \u in {0,1}  
   {\node[B]() at (\u,0){};     \node[B]() at (2,\u){};    }        
   \node[B]() at (2,1){};      \node[B]() at (1,2){};    
   \node[B]() at (0,1){};      \node[B]() at (1,0){};  
    \node[M]() at (0,0){};      
    \node[M]() at (2,0){};                           
   \node[T]() at (1,1){};

   \node[B]() at (2,2){};
   
    \node[txt] () at (.65,1.25) {$s_2$};      
    \node[T,label=left:$s_1$](s1) at (0,2){};    
        \node[T,label=below:$s_3$](s3) at (1,0){};  
      \node[txt] () at (0,-0.3) {$y_0$};  
      \node[txt] () at (2,-0.3) {$x_0$};  
       \node[txt] () at (1.7,1.6) {$P$};      
       \node[txt] () at (.35,.4) {$R$}; 
          
     	\end{tikzpicture}	      
	\caption{$x_1$ is a terminal}
	\label{twoframes}
		\end{center}
	\end{figure}

Now we may assume that all terminals are on $C$.
If $x_0$ is a terminal, label it with $s_1$ and let $R$ be the smallest path along $C$ containing $x_0,y_0$ and another terminal; label it with $s_3$. Clearly the
 path from $s_1$ to the remaining terminal $s_2$ not using edges of $R$ intersects $C_1$ at a vertex $w$.  Hence $[C_1,w]$ is a framing for $\pi_1,\pi_2$, and $R$ contains the required mating path from $s_3$ to $z$ (see 
Fig.\ref{onC} (i)).
	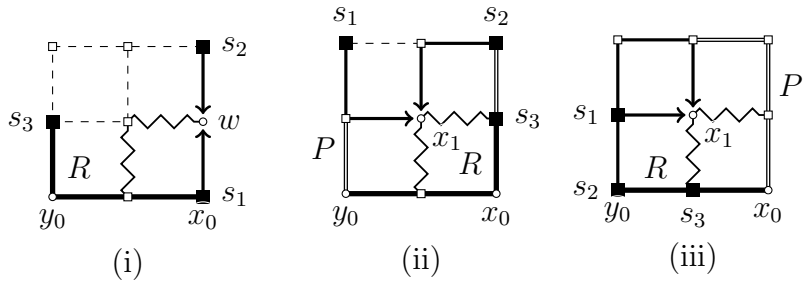
\begin{figure}[htp]
  \tikzstyle{T} = [rectangle, minimum width=.1pt, fill, inner sep=2.5pt]
\tikzstyle{B} = [rectangle, draw=black!, minimum width=1pt,fill=white, inner sep=1.5pt]
\tikzstyle{txt}  = [circle, minimum width=1pt, draw=white, inner sep=0pt]
\tikzstyle{Wedge} = [draw,line width=1.5pt,-,black!100]
\tikzstyle{M} = [circle, draw=black!, minimum width=1pt, fill=white, inner sep=1pt]
\begin{center}

\begin{tikzpicture}
                     
          \draw[dashed]   (2,2) -- (0,2)-- (0,1) -- (1,1) -- (1,2);            
              \draw[->,line width=1.2pt] (2,0) -- (2,.9);
              \draw[->,line width=1.2pt] (2,2) -- (2,1.1);               
    \draw[line width=.7,snake=zigzag]   (1,0) -- (1,1) -- (2,1);
   \draw[line width=2pt] (0,1) -- (0,0) -- (1,0) -- (2,0) ;
      
    \node[B]() at (1,2){};    
    \node[B]() at (0,2){}; 
    \node[B]() at (1,1){};    
        \node[M]() at (0,0){}; 
    \node[B]() at (1,0){};   
       \node[M]() at (2,1){};  
                        
     \node[T,label=right:$s_1$] (s1) at (2,0) {};           
        \node[T,label=left:$s_3$] (s3) at (0,1) {};   
           \node[T,label=right:$s_2$] (s2) at (2,2) {};        
   
  \node[txt] () at (0,-0.3) {$y_0$};  
      \node[txt] () at (2,-0.3) {$x_0$};  
        \node[txt] () at (2.35,1) {$w$};  
            \node[txt] () at (.35,.4) {$R$}; 
            
                 \node[txt]() at (1,-.9){(i)};     
     	\end{tikzpicture}
		 \hskip.5truecm
\begin{tikzpicture}
                
          \draw[dashed]    (1,2) -- (0,2);             
              \draw[->,line width=1.2pt] (0,2) -- (0,1)-- (.9,1);               
              \draw[->,line width=1.2pt] (2,2) -- (1,2)--(1,1.1)  ;   
                                  
    \draw[line width=.7,snake=zigzag]   (1,0) -- (1,1) -- (2,1);
   \draw[double,line width=.5pt] (0,0) -- (0,1) (1,2) (2,2)--(2,1);
  \draw[line width=2pt] (0,0)--(1,0)--(2,0)--(2,1);
    
     \node[B]() at (1,2){};    
     \node[B]() at (0,1){}; 
        \node[M]() at (1,1){};      
        \node[B]() at (1,0){};    
         \node[M]() at (2,0){};      
        \node[M]() at (0,0){};
                        
     \node[T,label=right:$s_3$] (s3) at (2,1) {};           
        \node[T,label=above:$s_1$] (s1) at (0,2) {};   
           \node[T,label=above:$s_2$] (s2) at (2,2) {};        
 
  \node[txt] () at (0,-0.3) {$y_0$};  
      \node[txt] () at (2,-0.3) {$x_0$};  
        \node[txt] () at (1.35,.7) {$x_1$};  
         \node[txt] () at (1.65,0.4) {$R$}; 
             \node[txt] () at (-.3,.6) {$P$}; 
              \node[txt]() at (1,-.9){(ii)};     
     	\end{tikzpicture}
	 \begin{tikzpicture}
             \draw[double, line width=.5pt]  (2,0)-- (2,1)-- (2,2) -- (1,2) ; 
            \draw[->,line width=1.2pt] (0,0)-- (0,2)--(1,2)-- (1,1.1);          
              \draw[->,line width=1.2pt]  (0,1) -- (.9,1);
    \draw[line width=.7,snake=zigzag]   (1,0) -- (1,1) -- (2,1);
   \draw[line width=2pt] (0,0) -- (1,0) -- (2,0) ;
               
   \node[B]() at (0,2){}; 
     \node[B]() at (1,2){};    
       \node[B]() at (2,2){}; 
           \node[B]() at (2,1){}; 
         \node[B]() at (1,0){};    
          \node[M]() at (2,0){};   
           \node[M]() at (1,1){};                                     
     \node[T,label=left:$s_2$] (s2) at (0,0) {};           
        \node[T,label=left:$s_1$] (s1) at (0,1) {};   
           \node[T,label=below:$s_3$] (s3) at (1,0) {};        
              
  \node[txt] () at (0,-0.3) {$y_0$};  
      \node[txt] () at (2,-0.3) {$x_0$};  
          \node[txt] () at (1.35,.7) {$x_1$};  
         \node[txt] () at (.5,0.3) {$R$}; 
          \node[txt] () at (2.3,1.4) {$P$};           
               \node[txt]() at (1,-.9){(iii)};     
     	\end{tikzpicture}
	\caption{Mating $s_3$ into $x_0$ or $y_0$}
	\label{onC}
		\end{center}
	\end{figure}
		
If $x_0$ is not a terminal, 
let $P\subset C$ be the smallest subpath containing $x_0$ and two terminals. We label $s_2$ the terminal missed by $P$. 
For the particular case when all terminals belong to $A(1)\cup B(3)$ let $s_3\in B(3)$;
otherwise, let $s_3\in  (C-(A(1)\cup B(3)))$. Define $R\subset C$ as the smallest path containing $s_3, x_0$ and $y_0$.
 In each case there exist edge disjoint mating paths
from both $s_1,s_2$ to $x_1$ not using edges of $R$.
 Then
  $[C_1,x_1]$ is a framing for $\pi_1,\pi_2$, and  $R$ contains a
   mating path from $s_3$ to $x_0$ or $y_0$ (see Fig.\ref{onC} (ii) and (iii)).
\end{proof}

\begin{lemma}
\label{exit}
Let $A$  be a boundary line of a quadrant $Q\subset G$. Let $Q_0$ be the subgraph obtained by removing the edges of $A$ from $Q$, and
let $Q_i$, $1\leq i\leq 4$, be one of the subgraphs in Fig.\ref{Qmin} obtained from $Q_0$ by edge removal and edge contraction.

(i) For any $H=Q_i$, $1\leq i\leq 4$, and for any three distinct terminals of $H$ there exist edge disjoint mating paths in $H$ from the terminals into not necessarily distinct vertices in $A$.
\begin{figure}[htp]
\tikzstyle{W} = [double,line width=.5,-,black!100]
 \tikzstyle{V} = [circle, minimum width=1pt, fill, inner sep=1pt]
 \tikzstyle{T} = [circle, minimum width=1pt, fill, inner sep=1pt] 
  \tikzstyle{A}  = [rectangle, minimum width=1pt, draw=black,fill=white, inner sep=1.4pt]
\begin{center}
\begin{tikzpicture}
\draw[dashed] (0.5,0.5)--(2.3,0.5)--(2.3,0.9)--(.5,0.9)--(.5,0.5);
\foreach \x in {.7,1.4,2.1}\draw(\x,.7)--(\x,2.1);
\foreach \y in {1.4,2.1}\draw(.7,\y)--(2.1,\y);
\foreach \x in {.7,1.4,2.1}\foreach \y in {.7,1.4,2.1}\node[V] () at (\x,\y){};
\foreach \x in {.7,1.4,2.1}\node[A]()at(\x,.7){};
\node() at (.2,.7){A};
\node() at (1.4,0){$Q_0$};
\end{tikzpicture}
\hskip.5cm
\begin{tikzpicture}
\draw[dashed] (0.5,1.2)--(2.3,1.2)--(2.3,1.6)--(.5,1.6)--(.5,1.2);
\draw (1.4,2.1)--(1.4,1.4)--(2.1,1.4) (.7,1.4)--(1.4,1.4) (.7,.7)--(.7,1.4) (1.4,.7)--(1.4,1.4) (2.1,.7)--(2.1,2.1);
\foreach \x in {.7,1.4,2.1}\foreach \y in {.7,1.4}\node[V] () at (\x,\y){};
\node[T]()at(2.1,2.1){};\node[T]()at(1.4,2.1){};
\foreach \x in {.7,1.4,2.1}\node[A]()at(\x,.7){};
\node() at (2.7,1.4){N};
\node() at (1.4,0){$Q_1$};
\end{tikzpicture}
\begin{tikzpicture}
\draw[dashed] (0.5,1.2)--(2.3,1.2)--(2.3,1.6)--(.5,1.6)--(.5,1.2);
\draw (.7,1.4)--(.7,2.1)--(1.4,1.4) (2.1,1.4)--(.7,1.4) (.7,.7)--(.7,1.4) (1.4,.7)--(1.4,1.4) (2.1,.7)--(2.1,2.1);
\foreach \x in {.7,1.4,2.1}\foreach \y in {.7,1.4}\node[V] () at (\x,\y){};
\foreach \x in {.7,2.1}\node[T] () at (\x,2.1){};
\foreach \x in {.7,1.4,2.1}\node[A]()at(\x,.7){};
\node() at (1.4,0){$Q_2$};
\end{tikzpicture}
\hskip.7cm
\begin{tikzpicture}
\draw[dashed] (0.35,2.15)--
(1.2,1.2)--(2.3,1.2)--(2.3,1.6)-- (1.3,1.6)--
(.8,2.3)--(0.35,2.15);
\draw (.7,2.1)--(2.1,2.1);
\foreach \x in {.7,1.4,2.1}\draw(\x,.7)--(\x,2.1);
\foreach \x in {1.4,2.1}\foreach \y in {.7,1.4}\node[V] () at (\x,\y){};
\foreach \y in {.7,2.1}\node[V] () at (.7,\y){};
\node[T]()at(1.4,2.1){};
\node[T]()at(2.1,2.1){};
\foreach \x in {.7,1.4,2.1}\node[A]()at(\x,.7){};
\node() at (1.4,0){$Q_3$};
\end{tikzpicture}
\begin{tikzpicture}
\draw[dashed] 
(1.6,2.3)--(2.4,1.6)--(2.2,1)--(1.6,1.7)--(1.2,1.7)--(.6,1)
--(.4,1.6)--(1.2,2.3)--(1.6,2.3);
\draw (.7,2.1)--(2.1,2.1);
\foreach \x in {.7,1.4,2.1}\draw(\x,.7)--(\x,2.1);
\foreach \x in {.7,2.1}\foreach \y in {.7,1.4}\node[V] () at (\x,\y){};
\foreach \y in {.7,2.1}\node[V] () at (1.4,\y){};
\node[T]()at(.7,2.1){};
\node[T]()at(2.1,2.1){};
\foreach \x in {.7,1.4,2.1}\node[A]()at(\x,.7){};
\node() at (1.4,0){$Q_4$};
\node() at (0,1.4){N};
\end{tikzpicture}
\caption{Mating into $A$ in adjusted quadrants}
	\label{Qmin}
\end{center}
\end{figure}
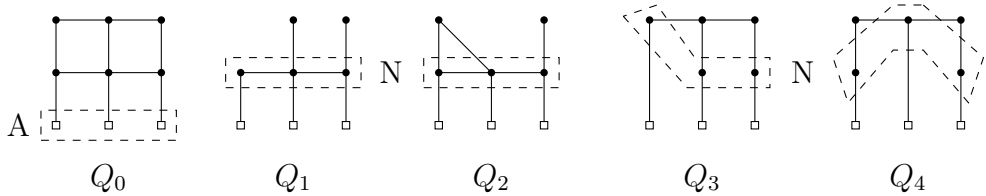

(ii) If  $s_1,t_1,s_2$ are not necessarily distinct terminals in $Q_0$ then there is
 an $s_1,t_1$-path  in $Q_0$ and an edge disjoint mating path from $s_2$ into a vertex of $A$.

(iii) From any three distinct terminals of $Q_0$ there exist pairwise edge disjoint mating paths 
into three distinct vertices of $A$. Furthermore, the claim remains true if two terminals not in $A$ coincide. 
\end{lemma}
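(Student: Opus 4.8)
Take $Q$ to be the upper left quadrant with $A=A(3)\cap Q$, and write $e_1,e_2,e_3$ for the three vertices of $A$ (the squares in Fig.~\ref{Qmin}). In $Q_0=Q-E(A)$ each $e_j$ is a leaf whose only neighbour is the middle-row vertex $m_j$ directly above it; set $M=\{m_1,m_2,m_3\}$ and let $C$ be the $2\times 3$ ``core'' spanned by the top and middle rows. Every assertion of the lemma asks for edge disjoint mating paths carrying the given terminals down to $A$, so the whole statement concerns edge disjoint paths from a prescribed terminal multiset to the leaves $e_1,e_2,e_3$. The key remark is that a path can enter a leaf $e_j$ only as its last vertex, so a path uses the pendant edge $m_je_j$ if and only if one of its ends is $e_j$. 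Hence an internal path between two terminals off $A$ uses no pendant edge, and three paths to three \emph{distinct} exits automatically use the three distinct pendant edges. This reduces (iii), away from on-$A$ terminals, to edge disjoint paths from the three terminals to three distinct targets of $M$ inside $C$, and reduces (ii) to an $s_1,t_1$-path in $C$ plus an edge disjoint path from $s_2$ to $M$. I would phrase each such routing as an integral flow of value three in a unit-capacity network (a source joined to the terminals with capacity the multiplicity, every edge of the ambient graph of capacity one, and each admissible exit joined to a sink by a capacity-one arc); a value-three integral flow splits into three edge disjoint paths ending at \emph{distinct} exits, distinctness being forced by the capacity-one sink arcs. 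By max-flow--min-cut it then suffices to exclude an edge cut of size at most two separating all terminals from $A$, and for part~(i), where exits may repeat, the only bottleneck is the triple of pendant edges.

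\textbf{Part (i).} Each of $Q_1,\dots,Q_4$ has at most nine vertices and, after accounting for the reflection recorded by the marked ``N'' corner, only boundedly many inequivalent triples of terminals. For each graph I would verify the cut criterion by inspection: every non-leaf vertex has two edge disjoint routes to $A$, and no two edges sever a whole triple from all three exits, so the flow of value three exists and yields the required mating paths, two of them sharing an exit only when a terminal already sits on $A$. This is a finite check rather than a conceptual difficulty.

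\textbf{Parts (ii) and (iii).} For an arbitrary placement of terminals in $Q_0$ the plan is to normalise to one of the base graphs by the two routability-preserving moves that generate $Q_1,\dots,Q_4$ from $Q_0$ in Fig.~\ref{Qmin}: delete an edge that no optimal path can use (an edge at a terminal-free leaf, or an edge cut off from $A$ together with a terminal-free corner), and contract the forced pendant detour of a terminal trapped at a degree-two corner. Each move preserves the existence of an edge disjoint routing to $A$, so after finitely many of them the configuration agrees, up to the symmetry of $Q$, with some $Q_i$, and (iii) follows from part~(i); the coincident-pair case of (iii) is exactly the value-three flow out of a multiplicity-two source at one vertex. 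For (ii) I would first route $s_1,t_1$ inside $C$ and then escape $s_2$ along any exit column left free; since the $s_1,t_1$-path touches no pendant edge, all three exit edges stay available, and the $s_1,t_1$-path can be chosen to avoid $s_2$'s column because $C$ is $2$-edge-connected. (Note that $C$ is only $2$-edge-connected and, as a direct check shows, is \emph{not} weakly $2$-linked, so Lemma~\ref{w2linked} does not apply to $C$ itself and the freedom in the choice of exit must be used.)

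\textbf{Main obstacle.} The delicate point throughout is the behaviour at the two degree-two corners of $Q$ and at terminals already on $A$, where the min cut is exactly two or three. The crux is to prove that the normalisation never leaves two terminals stranded behind a common two-edge cut, and, for (iii), that three \emph{distinct} exits can be realised simultaneously---in particular when two terminals coincide or lie above the same exit column. Showing that the edge-removal and contraction steps are genuinely routability-preserving near these corners is the step I expect to demand the most care.
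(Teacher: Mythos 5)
Your parts (i) and (ii) are close to what the paper does: (i) is in both cases a finite inspection of the four graphs (the paper organizes it around the neighbour set $N$ of $A$ rather than a max-flow/min-cut criterion, but it is the same kind of routine check), and (ii) is morally the paper's construction (the paper picks a row $A(i)$ with $s_2\notin A(i)$ and a column $B(q)$ missing $s_1,t_1$, joins $s_1,t_1$ through $A(i)$, and sends $s_2$ along its own row to $B(q)$ and down), except that your version needs a patch when $s_1$ or $t_1$ lies on $A$ itself, since then the $s_1,t_1$-path cannot stay inside the core $C$.

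The genuine gap is part (iii). You propose to derive it from part (i) by ``normalising'' an arbitrary configuration in $Q_0$ to one of $Q_1,\dots,Q_4$ via routability-preserving deletions and contractions, and this cannot work as described. First, your moves are inapplicable in the generic case: if the three terminals lie in distinct columns, off $A$ and off the degree-two corners, then no pendant edge may be deleted (all three vertices of $A$ are needed, because (iii) demands three \emph{distinct} exits and there are only three), no terminal is trapped at a degree-two corner, and $Q_0$ itself is not one of the base graphs --- so the procedure never reaches any $Q_i$. Second, the $Q_i$ of Fig.\ref{Qmin} are not normal forms for arbitrary configurations; they are specific degraded quadrants that arise in the application \cite{66} when certain edges of $Q$ are already consumed by other paths, and the paper's logic runs in the opposite direction: (ii) and (iii) are statements about $Q_0$ proved directly, while (i) is a separate inspection of those special graphs. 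Third, even where a contraction move is available, a routing in the contracted graph does not automatically lift: two edge-disjoint paths passing through the contracted vertex may both require the contracted edge when pulled apart --- you flag exactly this as your ``main obstacle,'' and it is precisely where the plan breaks. The fix is to prove (iii) directly, as the paper does: terminals in distinct columns exit straight down their columns (distinctness of exits is automatic since each vertex of $A$ is a leaf of $Q_0$); if a column carries two terminals, shift one of them along row $A(1)$ or $A(2)$ into a terminal-free column; if one column carries all terminals, or two coincide, spread them first into distinct rows and then into distinct columns along those rows. Your own opening reduction --- three edge-disjoint paths to three distinct vertices of $M$ inside $C$ --- is essentially this observation; you just never carried it out, replacing it with the normalisation detour instead.
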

\begin{proof} 
W.l.o.g. assume that $Q$ is the upper left quadrant of $G$ and $A=Q\cap A(3)$. Claim (i) can be checked by brief inspection
as follows. If $t$ is a terminal located at a neighbor of some vertex $v\in A$, then $t$ is mated directly into $v$. Let $N$ be the set of neighbors of the vertices in $A$. Assuming that $k$ terminals
belong to $N$, there are $3-k$ terminal-free vertices of $N$, where $1\leq k\leq 3$. Now it is enough to mate the $3-k$ or less terminals not in $N\cup A$ into the terminal-free vertices of $N$. This is trivial for $k=3$, and also for $k=2$, since $H-A$ is connected. For $k=1$, given any two terminal free
vertices $x,y\in N$, there are edge disjoint paths in $H$ from the (at most) two terminals not in $N$ into $x$ and $y$; this can be checked easily for every $H=Q_i$, $1\leq i\leq 4$. 

Claim (iii) is clear provided the terminals are in distinct columns of $Q_0$. If a column contains exactly two terminals not in $A$, then keeping one terminal in that column the other should be mated into a distinct terminal-free column using edges in $A(1)$ or $A(2)$. Similarly, if column $B(j)$ ($1\leq j\leq 3$) contains all terminals, it is enough to mate two coinciding terminals  (if any) not in $A$  into distinct rows $A(1)$ and  $A(2)$, and mating the two distinct terminals/mates into distinct columns along the rows. In each case the mating paths can be extended to $A$ along the distinct columns. 

To see claim (ii),
let $i\in\{1,2\}$ and $q\in\{1,2,3\}$ be such that $s_2\notin A(i)$ and $B(q)\cap \{s_1,t_1\}=\emptyset$. Then we mate the terminals $s_1,t_1$ into $A(i)$ along their columns different from $B(q)$ and connect the mates in $A(i)$ to obtain an $s_1,t_1$-path $P_1$.
 Since $s_2=(j,p)$ with $j=i+1\pmod 2$, it follows that $P_1$ does not use edges of $A(j)$ and $B(q)$. 
Therefore,  we can take an edge disjoint mating path along $A(j)$ from $s_2$  to a vertex in $B(q)$ and extend it along $B(q)$ to $A$. Observe that exactly the same argument works even if $s_2\in \pi_1$. 
\end{proof}
\subsection{Quadrants containing four terminals}
\begin{lemma}
\label{heavy4}
Let $A,B$  be a horizontal and 
a vertical boundary line of quadrant $Q$, let $c$ be the corner vertex of $Q$ not in $A\cup B$, and let $b$ be the middle vertex of $B$ (see  $Q_0$ in Fig.\ref{except}). Denote by $Q_0$  the grid obtained by removing the edges of $A$ from $Q$, and let $T$ be a set of  at most four distinct terminals in $Q_0$. 

(i)  If  $T\subset Q_0-A$ and $c\notin T$, 
then for every terminal $s\in T$, there  is a linkage
in $Q_0$  to connect $s$ to $b$, and  there exist edge disjoint mating paths in $Q_0$
from the remaining terminals of $T$ into not necessarily distinct vertices of $A$. 

\begin{figure}[htp]
\tikzstyle{T} = [rectangle, minimum width=.1pt, fill, inner sep=2.5pt]
 \tikzstyle{V} = [circle, minimum width=1pt, fill, inner sep=1pt]
 \tikzstyle{A}  = [circle, minimum width=.5pt, draw=black,fill=white, inner sep=2.4pt]
  \tikzstyle{B}  = [rectangle, minimum width=1pt, draw=black,fill=white, inner sep=1.4pt]
\begin{center}
\begin{tikzpicture}
\draw[dashed] (0.5,0.5)--(2.3,0.5)--(2.3,.9)--(.5,.9)--(.5,0.5);
\foreach \y in {1.4,2.1}\draw (.7,\y)--(2.1,\y);
\foreach \x in{.7,1.4,2.1}\draw (\x,.7)--(\x,2.1);
\foreach \x in {.7,1.4,2.1}\foreach \y in {1.4,2.1}\node[V] () at (\x,\y){};
\foreach \x in{.7,1.4,2.1}\node[B] () at (\x,.7){};
\node() at (2.1,2.4){B};
\node() at (.3,1){A};
\node() at (1.4,0){$Q_0$};
\node[B,label=left:$c$] () at(.7,2.1){};
\node[T] () at(2.1,1.4){};
\node() at (2.3,1.4){$b$};

\end{tikzpicture}
\hskip1cm
\begin{tikzpicture}
\foreach \y in {1.4,2.1}\draw (.7,\y)--(2.1,\y);
\foreach \x in{.7,1.4,2.1}\draw (\x,.7)--(\x,2.1);
\foreach \x in {.7,1.4,2.1}\foreach \y in {1.4,2.1}\node[V] () at (\x,\y){};
\foreach \x in{.7,1.4,2.1}\node[B] () at (\x,.7){};
\node[T,label=left:$s_{3}$] () at (.7,.7){};
\node[T,label=left:$s_{2}$] () at (.7,1.4){};
\node[T,label=left:$s_1$] () at (.7,2.1){};
 \node() at (1.4,0){$T_1$};
\end{tikzpicture}
\hskip1cm
\begin{tikzpicture}
\foreach \y in {1.4,2.1}\draw (.7,\y)--(2.1,\y);
\foreach \x in{.7,1.4,2.1}\draw (\x,.7)--(\x,2.1);
\foreach \x in {.7,1.4,2.1}\foreach \y in {1.4,2.1}\node[V] () at (\x,\y){};
\foreach \x in{.7,1.4,2.1}\node[B] () at (\x,.7){};
\node[T,label=above:$s_{2}$] () at (2.1,2.1){};
\node[T,label=above:$s_{3}$] () at (1.4,2.1){};
\node[T,label=above:$s_{4}$] () at (.7,2.1){};
\node[T] () at(.7,2.1){};
\node[T] () at(2.1,1.4){};
\node() at (1.9,1.2){$s_1$}; 
 \node() at (1.4,0){$T_2$};
\end{tikzpicture}
\caption{Projection to $A$}
	\label{except}
\end{center}
\end{figure}
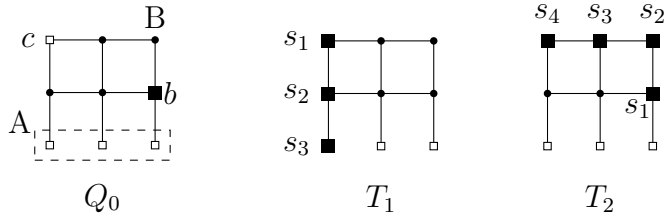
(ii) If $T$ is different from $T_1$ and 
$T_2$  in Fig.\ref{except}, then
for $\min\{3,|T|\}$ choices of a terminal $s\in T$, 
there is a linkage
in $Q_0$  to connect $s$ to $b$, and  there exist edge disjoint mating paths in $Q_0$
from the remaining terminals of $T$ into not necessarily distinct vertices of $A$.

(iii)
If $T$ is one of $T_1$ and 
$T_2$  in Fig.\ref{except}, then the claim  in (ii) above is true only for $s=s_1$ and $s_2$.
\end{lemma}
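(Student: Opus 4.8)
The plan is to exploit that deleting the edges of $A$ turns each vertex of $A$ into a pendant of $Q_0$. Write $a_1,a_2,a_3$ for the vertices of $A$ from the $c$-side to the $B$-side (so $a_3\in A\cap B$), and $m_1,m_2,m_3$ for their unique neighbours in the middle row; then $b=m_3$ and each $a_j$ is joined to $Q_0$ only by the pendant edge $a_jm_j$. Consequently a mating path into $A$ must finish with some $a_jm_j$, so `escaping into $A$' means reaching an unused $m_j$ and stepping down, and distinct escapes automatically use distinct pendant edges (hence land in distinct vertices of $A$ whenever no terminal already lies in $A$). The whole argument then follows one scheme: reserve a single path $P_s$ from the chosen terminal $s$ to $b$, and route the remaining at most three terminals into $A$ inside $Q_0-E(P_s)$. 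Since $b=m_3$ is adjacent to $a_3\in A$, the reserved path is compatible with the escape picture --- appending the pendant edge $ba_3$ to $P_s$ would itself be an escape into $A$ --- so reserving the route to $b$ never blocks the $A$-exits unnecessarily.

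For part (i), where $T\cap A=\emptyset$ and $c\notin T$, I would, for each terminal $s$, choose $P_s$ to hug the top row and the right column so that the residual graph $Q_0-E(P_s)$, after suppressing the degree-one and degree-two vertices created by the deletion, is precisely one of the graphs $Q_1,\dots,Q_4$ of Fig.~\ref{Qmin}, carrying the (at most three) remaining terminals. Lemma~\ref{exit}(i) then produces edge-disjoint mating paths from those terminals into $A$, and because no remaining terminal sits in $A$ or at $c$, none of them has had its only exit consumed by $P_s$. This yields a valid configuration for \emph{every} choice of $s$.

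Part (ii) runs the same reserve-and-escape scheme, now permitting terminals in $A$ (which need no mating path and so free pendant edges) and at $c$, and exploits the slack in the statement: only $\min\{3,|T|\}$ terminals need to work. I would organise the casework by $|T|$, by whether $c\in T$, and by whether $b\in T$, and in each case exhibit, for enough choices of $s$, a path $P_s$ whose complement contains a copy of some $Q_i$ accommodating the leftover terminals, finishing again through Lemma~\ref{exit}(i). The only configurations for which no such $Q_i$ can be arranged are $T_1$ and $T_2$, which is exactly why they are excluded here.

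The delicate part is (iii), and specifically its impossibility half. The positive claims (that $s_1$ and $s_2$ always work) are settled by explicit routings. To show that $s=s_3$ fails for $T_1$, and that $s=s_3,s_4$ fail for $T_2$, I would argue by a local count at the deficient corner $c$, which has degree two in $Q_0$. For $T_1$ (all three terminals in the $c$-column, $s_3$ being the pendant $a_1$) every $s_3,b$-path must start with the forced pendant edge $a_1m_1$ and then, in each of the two ways it can leave $m_1=s_2$, ends up using both edges at $c$, so the third terminal can no longer reach $A$; a short exhaustive check of these two branches closes the case. For $T_2$ (the whole top row together with $b=s_1$) the same mechanism applies, now also invoking that the top-right corner has degree two: every $s_3,b$- or $s_4,b$-path consumes both edges at one of these two degree-two vertices and thereby strands the terminal sitting there. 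The crux is precisely to verify that \emph{no} routing escapes this obstruction --- not merely that a natural one fails --- and this is what pins the set of bad terminals down to exactly $s_3$ (for $T_1$) and $s_3,s_4$ (for $T_2$).
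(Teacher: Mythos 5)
Your central mechanism for parts (i) and (ii) --- reserve an $s,b$-path $P_s$ and then find a copy of one of $Q_1,\dots,Q_4$ inside $Q_0-E(P_s)$ so that Lemma~\ref{exit}(i) finishes --- is not merely unverified, it is false, and this breaks the proposal. Use the paper's coordinates ($Q$ the upper left quadrant, $A=A(3)\cap Q$, $b=(2,3)$, $c=(1,1)$) and take the part~(i) instance $T=\{(1,2),(1,3),(2,1),(2,2)\}$ with $s=(1,2)$. Your recipe (``hug the top row and the right column'') gives $P_s=(1,2)-(1,3)-(2,3)$, which uses both edges of the degree-two vertex $(1,3)$ and strands the terminal sitting there. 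There are only two other $s,b$-paths. The path $(1,2)-(1,1)-(2,1)-(2,2)-(2,3)$ consumes $(1,1)-(2,1)$ and $(2,1)-(2,2)$, and every $Q_i$ (in either of the two orientations fixing $A$, i.e.\ up to the reflection swapping the outer columns) contains one of these two edges, so no $Q_i$ survives in the complement. The path $P_s=(1,2)-(2,2)-(2,3)$ --- which is exactly the path that proves the lemma, cf.\ Fig.~\ref{trQ}(i) --- consumes the edge $(1,2)-(2,2)$, and \emph{every} $Q_i$ in either orientation contains that edge (the reflected $Q_1$, $Q_2$ avoid needing vertex $(1,3)$ only by deleting or contracting it away, which is no help since the remaining terminal $(1,3)$ must lie in the subgraph). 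So for this $(T,s)$ your scheme cannot produce the required configuration at all, even though the conclusion is true: one must exhibit the escapes directly, namely $(2,1)\to(3,1)$, $(2,2)\to(3,2)$, $(1,3)-(2,3)-(3,3)$ next to $P_s=(1,2)-(2,2)-(2,3)$. This also refutes your closing claim in (ii) that the configurations defeating the $Q_i$-reduction are exactly $T_1$ and $T_2$; the failure set of the method is strictly larger, so the method cannot be what separates the exceptional sets from the rest. The paper's own proof never invokes Lemma~\ref{exit}: it routes $s$ to $b$ along short paths inside $A(2)\cup B(j)$, sends each remaining terminal down its own column to the pendant vertex of $A$, and reroutes along a free row when two terminals share a column.

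Part (iii) is the one place you are ahead of the paper, which simply declares (iii) obvious; your degree-two stranding argument for $T_1$ is essentially correct, with one slip: in the branch where the $s_3,b$-path goes $(3,1)-(2,1)-(2,2)-\cdots$, it is not the path that uses both edges at $c$, but the forced escape of $s_2=(2,1)$, which must pass through $c$ and consumes both of $c$'s edges, stranding $c$. For $T_2$ you still owe the enumeration, and it is not purely the degree-two mechanism: for $s=s_4=c$ there are exactly four $c,b$-paths in $Q_0$, and the path $(1,1)-(2,1)-(2,2)-(2,3)$ fails for a counting reason (the exit $(3,1)$ becomes unreachable, leaving two usable exit edges for the three terminals $b,(1,2),(1,3)$), not because some degree-two vertex loses both edges. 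In sum: (iii) is a reasonable sketch needing completion, but (i) and (ii) rest on a reduction that provably fails on admissible inputs, so the proposal does not establish the lemma.
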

\begin{proof} W.l.o.g. we assume that $Q$ is the upper left quadrant of $G$, thus $A=A(3)\cap Q$, $B=B(3)\cap Q$, $c=(1,1)$ and $b=(2,3)$. Let $T=\{s_1,s_2,s_3,s_4\}$. We say that $s_i$ satisfies the claim provided there is an 
 $s_i,b$-path in $Q_0$, and  there exist edge disjoint mating paths in $Q_0$
from the remaining terminals of $T$ into $A$. For the set of terminals $T_1$ and $T_2$ in Fig.\ref{except}, terminals 
$s_3,s_4$ do not satisfy the claim, meanwhile $s_1, s_2$ do satisfy it, thus (iii) is obvious.\\

(i) Assume that $T\subset A(1)\cup A(2)$ and $c\notin T$. 
If $s\in A(2)\cup B$, then we take the $s,b$-path in $A(2)\cup B$, and use distinct columns to mate the remaining terminals to (distinct) vertices of $A$, thus $s$ satisfies the claim. 
Assume now that $s=(1,2)\in T$ and take the 
$s,b$-path $P\subset B(2)\cup A(2)$. In the complement of $P$
there are edge disjoint mating paths to (distinct) vertices of $A$,
for the remaining at most three terminals 
(see Fig.\ref{trQ}(i)). \\

(ii) The claim is trivial for $|T|=1$ and $2$. For $|T|=3$ or $4$ we need to show that (at least) three terminals in $T$ satisfy the claim. By (i), we have two cases to consider: either  $T\cap A\neq\emptyset$ or  $T\cap A=\emptyset$ and $c\in T$. 

Assume first that $\|A\|\geq 1$, and let $s_1=(3,j)$, for some $1\leq j\leq 3$.
Any terminal $s=(i,j)\in A(1)\cup A(2)$ satisfies the claim.
This is obvious for terminals $s\in A(2)\cup (B\setminus A)$, since after taking the shortest 
$s,b$-path $P\subset B(j)\cup A(i)$ at most two terminals not in $A$ remain, and they can be mated into distinct vertices of $A$ along distinct columns. Thus we are done if $\|A(1)\cup A(2)\|=3$. The exceptions where this is not true are: either 
$||A||=1$ and $|T|=3$ or $||A||\geq 2$ and $|T|=4$. 

\begin{figure}[htp]
\tikzstyle{W} = [double,line width=.5,-,black!100]
\tikzstyle{T} = [rectangle, minimum width=.1pt, fill, inner sep=2.5pt]
\tikzstyle{V} = [circle, minimum width=1pt, fill, inner sep=1pt]
   \tikzstyle{B}  = [rectangle, minimum width=1pt, draw=black,fill=white, inner sep=1.4pt]
\begin{center}
\begin{tikzpicture}
\draw[line width=1.5pt](1.4,2.1)--(1.4,1.4)--(2.1,1.4);
\draw[dashed] (0.5,0.5)--(2.3,0.5)--(2.3,0.9)--(.5,0.9)--(.5,0.5);
\draw (.7,.7)--(.7,2.1)--(2.1,2.1) (.7,1.4)--(1.4,1.4)  (1.4,.7)--(1.4,1.4) (2.1,2.1)--(2.1,.7);
\foreach \x in {.7,1.4,2.1}\foreach \y in {1.4}\node[V] () at (\x,\y){};
\foreach \x in {.7,1.4,2.1}\foreach \y in {.7}\node[B] () at (\x,\y){};
\node[B]()at(.7,2.1){};\node[V]()at(2.1,2.1){};
\node[T,label=above:$s$]()at(1.4,2.1){};
\node[label=right:$b$]()at(1.9,1.4){};
\node() at (.2,.7){A};\node() at (1.65,1.65){$P$};
\node() at (1.4,0){(i)};
\end{tikzpicture}
\hskip1cm
\begin{tikzpicture}
\draw[line width=1.5pt](1.4,.7)--(1.4,1.4)--(2.1,1.4);
\draw (.7,.7)--(.7,2.1)--(1.4,2.1) (.7,1.4)--(1.4,1.4)
 (1.4,1.4)--(1.4,2.1)--(2.1,2.1)--(2.1,.7);
\foreach \x in {.7,1.4,2.1}\foreach \y in {1.4}\node[V] () at (\x,\y){};
\foreach \x in {.7,1.4,2.1}\foreach \y in {.7}\node[B] () at (\x,\y){};
\node[V]()at(2.1,2.1){};\node[V]()at(1.4,2.1){};
\node[V]()at(.7,2.1){};\node[T]()at(1.4,.7){};\node()at(1.15,.7){$s$};
\node[label=right:$b$]()at(1.9,1.4){};\node[T]()at(.7,.7){};
\node() at (1.65,1.15){$P$};
\node() at (1.4,0){(ii)};
\end{tikzpicture}
\hskip1cm
\begin{tikzpicture}
\draw[line width=1.5pt](2.1,.7)--(2.1,1.4);
\draw (.7,.7)--(.7,2.1)--(2.1,2.1) (.7,1.4)--(1.4,1.4) -- (1.4,.7) (1.4,2.1) -- (1.4,1.4) -- (2.1,1.4)--(2.1,2.1);
\foreach \x in {.7,1.4,2.1}\foreach \y in {1.4}\node[V] () at (\x,\y){};
\foreach \x in {.7,1.4,2.1}\foreach \y in {.7}\node[B] () at (\x,\y){};
\node[V]()at(2.1,2.1){};\node[V]()at(.7,2.1){};\node[V]()at(1.4,2.1){};
\node[T]()at(2.1,.7){};\node[label=right:$b$]()at(1.9,1.4){};
\node[label=right:$s$]()at(1.95,.7){};\node[T]()at(.7,.7){};
\node() at (1.8,1){$P$};
\node() at (1.4,0){(iii)};
\end{tikzpicture}

\caption{Mating into $A$ in the complement of $P$}
	\label{trQ}
\end{center}
\end{figure}
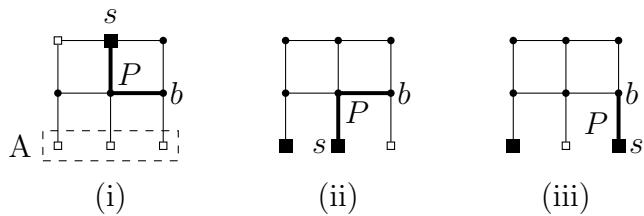

Let $\|A\|\geq 2$. Then we have that either $s=(3,3)$ or $(3,2)$ is a terminal in $T$.  After taking the shortest 
$s,b$-path $P\in B(j)\cup A(2)$, $j=2,3$, 
at most two terminals not in $A$ remain which can be mated in 
the complement of $P$ into distinct vertices of $A$ (see Fig.\ref{trQ}(ii) and (iii)).
Thus we may assume that $||A||=1$ and $|T|=3$. The analysis above shows that for $c\notin T$ or for $(3,1)\notin T$, every $s\in T$ satisfies the claim. If none happens, then the only exception
is $T=A(1)=T_1$.

 The cases not covered so far are $\|A\|=0$, $c\in T$ and $|T|=4$. Terminals 
 in $A(2)\cup B$ clearly satisfy the claim.  Thus we may assume that both  $s=(1,1)$ and $(1,2)$ are terminals in $T$. If  none of them satisfy  the claim, then we obtain easily that $(1,3)\in T$, then $(2,3)\in T$, thus $T=T_2$ follows.
\end{proof}

\begin{lemma}
\label{boundary}
Let $A,B$  be a horizontal and 
a vertical boundary line of a quadrant $Q$.
For every $s_1,t_1,s_2,s_3\in Q$  and $\psi: \{s_2,s_3\} \longrightarrow \{A,B\}$,
there is a linkage for $\pi_1$, and  there exist edge disjoint mating paths in $Q$
from $s_j$, $j=2,3$, to distinct vertices  $s_j^*\in \psi(s_j)$.
\end{lemma}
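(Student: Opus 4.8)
The plan is to fix coordinates as in the other proofs of this section: take $Q$ to be the upper left quadrant, so that $A=A(3)\cap Q$ is the bottom row, $B=B(3)\cap Q$ is the right column, the two lines meet at the corner $x_0=(3,3)$, and $c=(1,1)$ is the opposite corner. The reflection of $Q$ across the diagonal through $c$ and $x_0$ is an automorphism fixing $x_0$ and $c$ and interchanging $A$ and $B$; combined with the freedom to relabel $s_2\leftrightarrow s_3$, it reduces the four possibilities for $\psi$ to two, namely (I) $\psi(s_2)=\psi(s_3)=A$ and (II) $\psi(s_2)=A$, $\psi(s_3)=B$.

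The basic engine is the weak $2$-linkedness of $Q\cong P_3\Box P_3$ (Lemma \ref{w2linked}): for any prescribed exit $s_2^*\in\psi(s_2)$ it already yields an $s_1,t_1$-path $P_1$ together with an edge disjoint $s_2,s_2^*$-path $P_2$. The entire difficulty is the third route, the escape of $s_3$, which must avoid the edges of $P_1\cup P_2$ and end at a vertex of $\psi(s_3)$ different from $s_2^*$. To keep this route available I would not route $P_1,P_2$ arbitrarily but reserve the edges of the target lines. Recall from the proof of Lemma \ref{exit} that in $Q_0$ (the graph $Q$ with the edges of $A$ removed) an escape to $A$ is simply a descent along one column from $(2,j)$ to $(3,j)$, so that distinct escapes to $A$ compete only for distinct columns, and, by the diagonal symmetry, distinct escapes to $B$ compete only for distinct rows. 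Thus in Case (I) I first mate $s_2$ and $s_3$ into two distinct columns and descend to two distinct vertices of $A$ — the two-terminal case of the argument for Lemma \ref{exit}(iii), carried out inside $Q_0$ so that no edge of $A$ is used — and then link $\pi_1$ in the complement, where all horizontal edges of $A$ and much of the interior are still free. In Case (II) I send $s_2$ down a column to $A$ and $s_3$ across a row to $B$ and link $\pi_1$ through the interior; here $s_2^*\in A$ and $s_3^*\in B$, so the only possible clash of exits is at the shared corner $x_0$, which I avoid by steering at least one of the two escapes away from $(3,3)$.

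The step I expect to be the real obstacle is guaranteeing that all three routes survive simultaneously: deleting $P_1$ and the first escape from the twelve edges of $Q$ can, in a few tight positions, either saturate the one column (respectively row) that $s_3$ still needs, or force $s_3^*=s_2^*$. I therefore expect the argument to close with a short finite inspection of exactly these extremal configurations — the terminals saturating a single column or row, or clustering around $x_0$ — which are of the same nature as the exceptional sets $T_1,T_2$ isolated in Lemma \ref{heavy4}. In each such case I would give an explicit rerouting, typically pushing $P_1$ onto the boundary edges that the escapes leave unused and sending the escapes through the one remaining free column or row. Finally, the degenerate inputs — a terminal already lying on its own target line, or a coincidence among the $s_i$ — only shorten the argument, since such an escape is an empty path and releases its edges for the others.
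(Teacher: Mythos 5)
Your setup is sound and your diagnosis of where the difficulty lies is exactly right, but the proposal stops precisely at that point: the decisive step is announced, not carried out. You reduce $\psi$ to two cases by the diagonal symmetry, get $P_1$ and one escape from weak $2$-linkedness (Lemma \ref{w2linked}), and then observe that the whole problem is making the third route coexist with the first two. Your resolution of this is ``a short finite inspection of exactly these extremal configurations'' together with unspecified ``explicit reroutings.'' That inspection \emph{is} the lemma; nothing in your text performs it, bounds which configurations are extremal, or proves that your reroutings exist. In the paper this content occupies the entire proof: after the easy reduction to $s_2,s_3\in M=(A(1)\cup B(1))\cup\{x_1\}$, it runs a full case analysis on the position of $\pi_1$ relative to $S=Q-(A\cup B)$ and $A\cup B$ (Cases 1, 2, 3.1, 3.2, each with explicit edge-decompositions shown in Figures \ref{clamp}--\ref{pi1last}). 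So the proposal has a genuine gap: it is a plan whose hard core is deferred, and the deferred part is not routine -- your own sequential strategy (route escapes first, link $\pi_1$ in the complement, or vice versa) can fail for particular placements unless the routes are chosen with exactly the care your skipped inspection would have to supply; e.g.\ with $s_1=(1,1)$, $s_2=(1,2)$, $s_3=(2,2)$, the natural escape of $s_2$ through $(1,1)$ isolates $s_1$, and only a rerouting through $(1,3)$ saves the linkage.

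It is also worth noting that the paper's mechanism is structurally different from what you sketch, and the difference is what makes the case analysis tractable. The paper first fixes $P_1$ and then decomposes the remaining edges into two edge disjoint \emph{clamps} $Y_2,Y_3$ -- connected subgraphs whose anchor sets each reach both $A$ and $B$ (or the shared corner $x_0$) -- with disjoint anchor sets. Because every clamp can deliver a vertex to either target line, one decomposition serves \emph{all} maps $\psi$ simultaneously; Proposition \ref{partition} then just matches the two singletons to the two clamps. Your approach is target-specific: you commit $s_2$ and $s_3$ to particular lines up front, so each of your two $\psi$-cases would need its own routing argument and its own list of exceptional configurations, multiplying rather than collapsing the casework. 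Your final remark that coincidences or terminals already on their target line ``only shorten the argument'' is also unjustified as stated: a trivial escape still occupies an exit vertex, and the requirement $s_2^*\neq s_3^*$ means such degeneracies constrain, rather than release, the remaining routes.
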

\begin{proof} 
Terminals  $s_j\in Q$, $j=2,3$, will be called {\it singletons}. A connected subgraph $Y\subseteq Q$ containing a vertex $a\in A$ and $b\in B$ will be called a {\it clamp}
for its vertices with anchor set $\{a,b\}$. Observe that if a singleton $s_j$ is a vertex of $Y$, then there is a mating path from $s_j$
to $\psi(s_j)$. By definition, a path from $s_j$ to the `corner' vertex $x_0\in A\cap B$ is a clamp for $s_j$ with anchor set  $\{x_0\}$. 
In several cases the proof of the lemma consists of decomposing the edge set of $Q$ into
an $s_1,t_1$-path $P_1$ and two disjoint edge disjoint clamps, $Y_2,Y_3$, with disjoint anchor sets.

W.l.o.g. we assume that $Q$ is the upper left quadrant of $G$,  thus $A=A(3)\cap Q$, $B=B(3)\cap Q$,  $x_0=(3,3)$, and $x_1=(2,2)$.
Set $S=Q-(A\cup B)$ and let us call $C=Q-x_1$  the {\it boundary cycle} of $Q$. 
Observe that $Z=(A(2)\cup B(2))\cap Q$, the complement of $C$ in $Q$, is a clamp for its vertices. 

Let  $M=(A(1)\cup B(1))\cup\{x_1\}$. We claim that $s_2,s_3\in M$ can be assumed, otherwise the lemma follows. Suppose, for instance, that we have $s_3\in \{x_0,(3,2),(2,3)\}$. 
Then define $s_3^*=x_0$, and take the mating path $P_3^*= (s_3, x_0)$. Select  any vertex  $s_2^*\in \psi(s_2)\setminus\{x_0\}$,
and shift $x_0$ to its neighbor $y_0\neq s_3$, in case of $x_0\in\pi_1$. 
By Lemma \ref{w2linked}, $Q-x_0$ is  weakly $2$-linked, hence there is an $s_2,s_2^*$-path and an edge disjoint  linkage for 
$\pi_1$ (or an edge disjoint path from $\pi_1\setminus\{x_0\}$ to $y_0$,  if $x_0\in\pi_1$, which becomes a linkage for $\pi_1$ by appending the edge $y_0x_0$). 

	\begin{figure}[htp]
\tikzstyle{T} = [rectangle, minimum width=.1pt, fill, inner sep=2.5pt]
\tikzstyle{B} = [rectangle, draw=black!, minimum width=1pt, fill=white, inner sep=1.5pt]
\tikzstyle{txt}  = [circle, minimum width=1pt, draw=white, inner sep=0pt]
\tikzstyle{Wedge} = [draw,line width=.7pt,-,black!100]
\tikzstyle{M} = [circle, draw=black!, minimum width=1pt, fill=white, inner sep=1pt]

\begin{center}
\begin{tikzpicture}
      \draw[dashed] (0,1) -- (0,2) ;     
\draw[dashed] (2,2) -- (2,1)  ;
\draw[->,line width=1.2pt] (0,2) -- (2,2) -- (2,0.1) ;          
\draw[double,line width=.3,snake=zigzag]   (2,1) -- (1,1); 
\draw[double,line width=.3,snake=zigzag] (1,0) -- (1,2) ;    
\draw[line width=2pt] (1,1)--(0,1)--(0,0)-- (2,0);
  
   \node[B]() at (2,2){};         
   \node[B,label=left:$w$]() at (0,1){};   
   \node[B]() at (1,0){};    
   \node[B]() at (0,0){};    
   \node[T](t1) at (1,1){};    
      \node[B]() at (2,1){};   
            
           \node[T,label=right:$s_3^*$] (s1) at (2,0) {};    
              \node[T,label=above:$s_2$] (s2) at (1,2) {};              
        \node[T,label=left:$s_3$,label=above:$y_1$] (s3) at (0,2) {};

      \node[txt] () at (.75,1.3) {$x_1$};
  \node () at (2,-.3) {$s_1$}; 
   \node[txt] () at (1.25,.75) {$t_1$};   
        \node[txt] () at (1.6,1.65) {$P_3^*$};  
        \node[txt] () at (.35,.35) {$P_1$};  
    
       \node[txt]() at (1,-.9){(i)};        
     	\end{tikzpicture}
	\hskip.5cm
\begin{tikzpicture}

  \draw[->,line width=1.2pt] (0,0) -- (1.9,0);          
   \draw[double,line width=.3,snake=zigzag]  (0,0) -- (0,2) -- (2,2); 
   \draw[double,line width=.5] (0,1) -- (1,1) -- (1,2) (2,2) -- (2,1) ;
        \draw[line width=2pt] (1,0)--(1,1)--(2,1)--(2,0);
        
   \node[B]() at (2,2){};      
   \node[B]() at (1,2){};   
      \node[B]() at (2,1){};      
   \node[T]() at (1,0){}; 
     \node[B] () at (1,1) {}; 
       
               \node[T,label=above:$s_2$](s2) at (0,2){};    
                \node[T,label=right:$s_3^*$] (s1) at (2,0) {};                                  
              \node[T,label=left:$s_3$] (s3) at (0,0) {};
                \node[B] () at (0,1) {};
              
                \node[txt] () at (2,-.3) {$s_1$};
                 \node[txt] () at (1,-.3) {$t_1$};    
        \node[txt] () at (.35,1.65) {$Y$};  
        \node[txt] () at (.55,.35) {$P^*_3$};  
          \node[txt] () at (1.5,.65) {$P_1$};  
                 
       \node[txt]() at (1,-.9){(ii)};        
     	\end{tikzpicture}
	\hskip.5cm
	\begin{tikzpicture}
       \draw[dashed] (0,0) -- (0,1) (2,2)--(2,1);
      \draw[->,line width=1.2pt]  (0,0) -- (1.9,0);          
   \draw[double,line width=.3,snake=zigzag]   (2,2) -- (1,2) -- (1,0) ; 
     \draw[line width=2pt] (1,2)--(0,2) -- (0,1) -- (2,1) -- (2,0);
   
   \node[B]() at (2,2){};      
   \node[B]() at (1,2){};   
       \node[B]() at (1,1){};    
          \node[B]() at (0,1){};  
             \node[B]() at (1,0){};   
                \node[B]() at (0,2){}; 
           \node[T,label=right:$s_3^*$] (s1) at (2,0) {};    
              \node[T,label=right:$s_2$] (s2) at (2,1) {};        
              \node[T,label=left:$s_3$] (s3) at (0,0) {};
      \node[T,label=above:$t_1$](t1) at (1,2){};   
  \node () at (2,-.3) {$s_1$};

       \node[txt] () at (1.35,1.65) {$Y$};  
     \node[txt] () at (.35,1.35) {$P_1$};  
        \node[txt] () at (.55,.35) {$P^*_3$};  
 
       \node[txt]() at (1,-.9){(iii)};        
     	\end{tikzpicture}
	\caption{Clamps}
	\label{clamp}
		\end{center}
	\end{figure}
	
Case 1: $x_0\in \pi_1$. 	
Let $s_1=x_0$, and assume that one of the singletons belong to $Z$, say $s_2\in Z$. If  $t_1,s_3\in C$, then let $W\subset C$ be the  $t_1,s_3$-path containing   $x_0$. If $x_1\in\{t_1,s_3\}$, then let $y_1\in\{t_1,s_3\}\setminus\{x_1\}$, and $w\in A(1)\cup B(1)$ be the neighbor of $x_1$ that is different from $s_2$. Now we define $W$ as a walk starting 
with the edge $x_1w$, then going through $x_0$ around $C$ and ending up at $y_1\in C$.
We define $s_3^*=x_0$, $P^*_3\subset W$ to be the $s_3,s_3^*$-subpath of $W$, and $P_1\subset W$ as the $s_1,t_1$-subpath of $W$. Subgraph $Z$ or ($Z-x_1w$) is a clamp for  $s_2$ (see Fig.\ref{clamp} (i)).

Now we still have $s_1=x_0$, furthermore, none of the singletons is a vertex of $Z$. Since $s_2,s_3\in\{(1,1),(1,3),(3,1)\}$, by symmetry, 
we may assume that $s_3=(3,1)$, and either $s_2=(1,1)$ or  $s_2=(1,3)$.
In the first case the path $P_3^*\subset A$ serves as a mating of $s_3$ into $s_3^*=x_0$, the subgraph $A(1)\cup B(1)$ is a clamp $Y$ for $s_2$,  and the linkage for $\pi_1$ is obtained in the complement of $Y\cup P_3^*$ (see Fig.\ref{clamp} (ii)). If  $s_2=(1,3)$, then by symmetry, we may also assume that
$t_1\notin A$. Then the mating path $P_3^*$ defined as before,
the path $Y$ induced by $B(2)\cup\{(1,3)\}$ serves as a clamp for $s_2$, and the path $P_1$ on the vertices $A(2)\cup\{x_0,(1,2)\}$ is a linkage for $\pi_1$
 (see Fig.\ref{clamp} (iii)).
 
  From now on we assume that  $x_0\notin \pi_1$, and $s_2,s_3\in M$. 
In the discussion which follows we use the following obvious property which makes possible to find a matching between two edge disjoint clamps $Y_2, Y_3\subset Q$ and the two singletons $s_2,s_3$.
\begin{proposition}
\label{partition}
 For $s_1,t_1,s_2,s_3\in Q-x_0$, let $P_1$ be an $s_1,t_1$-path, and 
let $Y_2,Y_3\subset Q$ be edge disjoint clamps in the complement of $P_1$
with disjoint anchor sets.
If  both $Y_2-Y_3$ and $Y_3-Y_2$ contains at most one terminal among 
$\pi_0=\{s_2,s_3\}$,
then there is a matching  $\gamma:\pi_0\longrightarrow\{Y_2,Y_3\}$
such that $s_j\in\gamma(s_j)$, for $j=1,2$.\qed
\end{proposition}
Case 2: $S$ and $A\cup B$ is not spanned by $ \pi_1$, that is either $\pi_1\subset S$  or $\pi_1\subset A\cup B$. 
 
Let $\pi_1\subset S$. (We notice that due to the symmetry of rows and columns, the forthcoming arguments remain valid when swapping $A$ and $B$.)
 
If $\pi_1\subset A(1)$, then let $P_1=s_1t_1$. We define 
the clamp $Y_2$ as the $5$-path $(3,1)- (2,1)-(2,2) - (1,2) - (1,3)$ with the end vertices as its anchor set; let clamp $Y_3$ be defined as the complement of $Y_3$ in $Q-(1,1)$ anchored at  $\{x_0\}$ (see Fig.\ref{pi1inS} (i)). 
Since $\pi_0\cap (Y_2- Y_3)=\{(2,1)\}$, and 
$\pi_0\cap (Y_3- Y_2)=\emptyset$, furthermore, their anchor sets  are disjoint, Proposition \ref{partition} applies. 

 	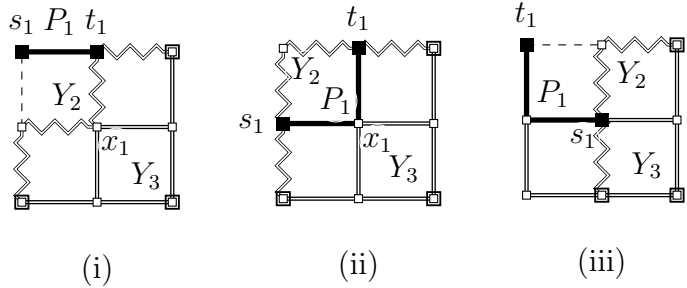
\begin{figure}[htp]
 \tikzstyle{T} = [rectangle, minimum width=.1pt, fill, inner sep=2.5pt]
\tikzstyle{B} = [rectangle, draw=black!, minimum width=1pt, fill=white, inner sep=1.5pt]
\tikzstyle{txt}  = [circle, minimum width=1pt, draw=white, inner sep=0pt]
\tikzstyle{Wedge} = [draw,line width=.7pt,-,black!100]
\tikzstyle{M} = [circle, draw=black!, minimum width=1pt, fill=white, inner sep=1pt]
\tikzstyle{A}  = [rectangle, line width=.7pt, draw=black, inner sep=2.5pt]

\begin{center}
\begin{tikzpicture}

   \draw[double,line width=.5pt]  (0,0) -- (2,0)-- (2,2) (1,0)--(1,1) -- (2,1) ; 
   \draw[double, line width=.3pt,snake=zigzag]  (0,0) -- (0,1) -- (1,1) -- (1,2) -- (2,2); 
   \draw[line width=2pt] (0,2) -- (1,2);
       \draw[dashed] (0,1)--(0,2);  
   \node[B]() at (2,2){};      
   \node[B]() at (0,1){};     
  \node[A]() at (2,0){}; 
   \node[A]() at (0,0){}; \node[A]() at (2,2){};   
         \node[B]() at (1,1){};      
              \node[B]() at (0,0){}; 
                    \node[B]() at (2,1){}; 
                    
             \node[T,label=above:$t_1$](t1) at (1,2){};              
                   \node[B](x0) at (2,0){};               
      \node[B] () at (1,0) {};                     
         \node[T,label=above:$s_1$](s1) at (0,2){};  
      \node[txt] () at (1.255,.75) {$x_1$};                
        
  
         \node[txt]() at (.5,2.4){$P_1$};
               \node[txt]() at (1.65,.35){$Y_3$};
               \node[txt]() at (.6,1.4){$Y_2$};
         \node[txt]() at (1,-.9){(i)};            
     	\end{tikzpicture}
	\hskip.5cm
	\begin{tikzpicture}

   \draw[double,line width=.5pt]  (0,0)--(2,0)-- (2,2) (1,0) -- (1,1) -- (2,1); 
   \draw[double, line width=.3pt,snake=zigzag]   (0,0)--(0,1) -- (0,2) -- (1,2) -- (2,2); 
   \draw[line width=2pt] (0,1)--(1,1) -- (1,2);
         
   \node[B]() at (2,2){};      
   \node[B]() at (0,2){};    
         \node[B]() at (1,1){};      
              \node[B]() at (1,0){}; 
                    \node[B]() at (2,1){}; 
                    
             \node[T,label=left:$s_1$](s1) at (0,1){};              
                   \node[B]() at (2,0){};               
      \node[B] () at (0,0) {};                     
         \node[T,label=above:$t_1$](t1) at (1,2){};  
         \node[A]() at (2,0){}; 
   \node[A]() at (0,0){}; \node[A]() at (2,2){}; 
            \node[txt] () at (1.255,.75) {$x_1$};                    
         \node[txt]() at (.3,1.7){$Y_2$};
               \node[txt]() at (.7,1.3){$P_1$};
                 \node[txt]() at (1.6,.4){$Y_3$};
         \node[txt]() at (1,-.9){(ii)};            
     	\end{tikzpicture}
	\hskip.7truecm
 	\begin{tikzpicture}

   \draw[double,line width=.5pt] (0,1) -- (0,0) -- (2,0) -- (2,2) (1,1)--(2,1); 
   \draw[double, line width=.3pt,snake=zigzag]   (1,0)-- (1,2) -- (2,2); 
   \draw[line width=2pt] (0,2)--(0,1) -- (1,1);    
     \draw[dashed] (0,2)--(1,2) ;
         
   \node[B]() at (0,1){};       
         \node[T](s1) at (1,1){};      
          \node[B]() at (1,0){}; 
           \node[B]() at (2,1){}; 
          \node[B] (t1) at (0,0) {};                                
           \node[B]() at (2,2){};              
            \node[B]() at (2,0){};                              
         \node[B]() at (1,2){}; 
       \node[A]() at (2,0){}; 
   \node[A]() at (1,0){}; \node[A]() at (2,2){};        
      \node[txt] () at (.75,.75) {$s_1$};                
           \node[T,label=above:$t_1$]() at (0,2){}; 
              
         \node[txt]() at (1.4,1.6){$Y_2$};
               \node[txt]() at (.35,1.35){$P_1$};
               \node[txt]() at (1.6,.4){$Y_3$};
         \node[txt]() at (1,-.9){(iii)};            
     	\end{tikzpicture}
 
	\caption{$\pi_1\subset S$}
	\label{pi1inS}
		\end{center}
	\end{figure}

 If $\pi_1\subset B(2)$, then let $P_1=s_1t_1$. We define 
the clamp $Y_2$ as the $5$-path $(3,1)- (2,1)-(2,2) - (1,2) - (1,3)$ with the end vertices as its anchor set; and let $Y_3$ be the complement of $Y_3$ in $Q$ anchored at $\{x_0\}$. 
Observe that $\pi_0\cap (Y_2- Y_3)=\{(1,1)\}$, and 
$\pi_0\cap (Y_3- Y_2)=\emptyset$, furthermore the anchor sets are disjoint. The same
clamp $Y_2$ works for $\pi_1=\{(1,2),(2,1)\}$, when we set $P_1=
(s_1,x_1,t_1)$, and define the clamps
$Y_2$ and $Y_3$ as before (see Fig.\ref{pi1inS} (ii)). The only change is that 
$\pi_0\cap (Y_3- Y_2)=\{x_1\}$, thus Proposition \ref{partition} applies.

Let $\pi_1=\{x_1,(1,1)\}$. Since $\pi_0\subset (A(1)\cup B(1)) \setminus\{(1,1)\}$,  by symmetry, we may assume that $|\pi_0\cap B(1)|\leq 1$. Define the path $P_1=s_1-(2,1)-t_1$, 
the clamp $Y_2=(1,3)-(1,2)-x_1-(3,2)$ with its end vertices as its anchor set, and let $Y_3=C-\{(1,1),(1,2)\}$ be a clamp anchored at $\{x_0\}$ (see Fig.\ref{pi1inS} (iii)).
Observe that 
$\pi_0\cap (Y_2- Y_3)=\{(1,2)\}$ and 
$\pi_0\cap (Y_3- Y_2)$ contains at most one terminal by our assumption. Thus Proposition \ref{partition} applies.

Let $\pi_1\subset A\cup B$. We define $P_1$ to be the $s_1,t_1$-path in $A\cup B$, let
$Y_2=M\setminus\{x_1\}$, and let $Y_3=Z$. 
Since $\pi_0\subset M$, the only vertices in the difference sets which can hold terminals of $\pi_0$ are
$(1,1)\in (Y_2- Y_3)$ and $x_1\in(Y_3- Y_2)$, thus Proposition \ref{partition} applies.

Case 3: $ \pi_1$ spans between $S$ and $A\cup B$, let $s_1\in S$, $t_1\in A\cup B$. (Due to the symmetry of rows and columns, the forthcoming arguments  remain valid when swapping $A$ and $B$.)

Case 3.1: $\pi_1\subset B(i)$ (or $\pi_1\subset A(i)$), $i=1,2$. For $s_1=(1,1)$ and $t_1=(3,1)$,
let $P_1= B(1)$ be the linkage for $\pi_1$, let  $Y_2=(1,3)-(1,2)-x_1-(3,2)$ be a clamp with the two end vertices as its anchor set, and define clamp $Y_3$ to be the subgraph induced by 
$(A(2)\cap Q)\cup B \cup\{(3,2)\}$ with anchor set $\{x_0\}$  (see Fig.\ref{pi1separated} (i)).

For $s_1=(2,1)$ and $t_1=(3,1)$, let $P_1=s_1t_1$, 
let  $Y_2=(A(1)\cap Q)\cup B$ anchored at $x_0$, and define $Y_3$ as the complement of $P_1\cup Y_2$ (the edges of $A$ might be removed) with anchor set $\{(3,2),(2,3)\}$, see in Fig.\ref{pi1separated} (ii).

For $s_1=(1,2)$ and $t_1=(3,2)$,
let $P_1=B(2)$, let $Y_2=(A(1)\cup B(1))\cap Q$ be a clamp with the two end vertices as its anchor set, and define the clamp
$Y_3$ as the complement of $P_1\cup Y_2$ anchored at $\{x_0\}$ (see Fig.\ref{pi1separated} (iii).
	
 	\begin{figure}[htp]
 \tikzstyle{T} = [rectangle, minimum width=.1pt, fill, inner sep=2.5pt]
\tikzstyle{B} = [rectangle, draw=black!, minimum width=1pt, fill=white, inner sep=1.5pt]
\tikzstyle{txt}  = [circle, minimum width=1pt, draw=white, inner sep=0pt]
\tikzstyle{Wedge} = [draw,line width=.7pt,-,black!100]
\tikzstyle{M} = [circle, draw=black!, minimum width=1pt, fill=white, inner sep=1pt]
\tikzstyle{A}  = [rectangle, line width=.7pt, draw=black, inner sep=2.5pt]

\begin{center}
 	\begin{tikzpicture}

   \draw[double,line width=.5pt] (1,0) -- (2,0) -- (2,2) (0,1)--(2,1); 
   \draw[double, line width=.3pt,snake=zigzag]   (1,0)-- (1,2) -- (2,2); 
   \draw[line width=2pt] (0,2)--(0,0) ;    
     \draw[dashed] (0,2)--(1,2) (0,0)--(1,0);
         
          \node[B]() at (0,1){};       
           \node[A]() at (2,0){};          \node[A]() at (2,2){};   
                 \node[A]() at (1,0){};     
          \node[B]() at (1,0){}; 
           \node[B]() at (2,1){}; 
          \node[B] () at (1,1) {};                                
           \node[B]() at (2,2){};              
            \node[B]() at (2,0){};                          
         \node[B]() at (1,2){};   
         
             \node[T,label=below:$t_1$](t1) at (0,0){};                 
           \node[T,label=above:$s_1$](s1) at (0,2){}; 
                    
         \node[txt]() at (1.4,1.6){$Y_2$};
               \node[txt]() at (.35,1.35){$P_1$};
               \node[txt]() at (1.6,.6){$Y_3$};
         \node[txt]() at (1,-.9){(i)};            
     	\end{tikzpicture}
	\hskip.5truecm
 \begin{tikzpicture}

   \draw[double,line width=.5pt]   (0,2)--(0,1)--(2,1) (1,2)--(1,0); 
   \draw[double, line width=.3pt,snake=zigzag]  (0,2) -- (2,2)--(2,0); 
   \draw[line width=2pt] (0,0) -- (0,1);
       \draw[dashed] (0,0)--(2,0);  
   \node[B]() at (2,2){};      \node[A]() at (1,0){};  
   \node[A]() at (2,0){};  \node[A]() at (2,1){}; 
   \node[B]() at (1,2){};    
         \node[B]() at (1,1){};           
              \node[B]() at (0,2){}; 
                    \node[B]() at (2,1){}; 
                    \node[B]() at (2,0){};               
                     \node[B] () at (1,0) {};  
            
             \node[T,label=left:$s_1$](s1) at (0,1){};                                 
         \node[T,label=left:$t_1$](t1) at (0,0){};                 
  
         \node[txt]() at (.3,.5){$P_1$};
               \node[txt]() at (1.4,.6){$Y_3$};
               \node[txt]() at (1.6,1.6){$Y_2$};
         \node[txt]() at (1,-.9){(ii)};            
     	\end{tikzpicture}
		\hskip.7truecm
	\begin{tikzpicture}

   \draw[double,line width=.5pt]  (0,0)--(2,0)-- (2,2)  (0,1) -- (2,1); 
   \draw[double, line width=.3pt,snake=zigzag]   (0,0)--(0,1) -- (0,2) -- (1,2) -- (2,2); 
   \draw[line width=2pt] (1,0) -- (1,2);
         
   \node[B]() at (2,2){};      
   \node[B]() at (0,2){};   \node[A]() at (0,0){};  \node[A]() at (2,2){}; 
         \node[B]() at (1,1){};      \node[A]() at (2,0){};   
              \node[B]() at (0,1){}; 
                    \node[B]() at (2,1){};                                             
                   \node[B]() at (2,0){};               
      \node[B] () at (0,0) {};                     
         \node[T,label=above:$s_1$](s1) at (1,2){};  
                       \node[T,label=below:$t_1$](t1) at (1,0){};       
         \node() at (.4,1.6){$Y_2$};
               \node() at (1.3,1.3){$P_1$};
                 \node() at (1.6,.4){$Y_3$};
         \node() at (1,-.9){(iii)};            
     	\end{tikzpicture}	

	\caption{$s_1\in S$ and $t_1\in A\cup B$}
	\label{pi1separated}
		\end{center}
	\end{figure}
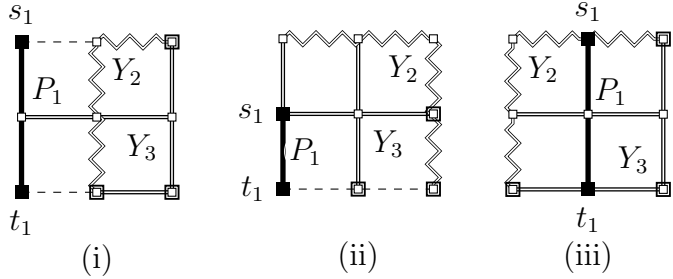
	
In each case, both difference sets,  $Y_2- Y_3$ and $Y_3- Y_2$, contain one 
vertex that can hold a terminal of $\pi_0$, furthermore, the anchor sets of the clamps are disjoint, thus Proposition \ref{partition} applies.

Case 3.2: $\pi_1\subset Q-A$ (or  $\pi_1\subset Q-B$).
For $s_1=(1,2)$ and $t_1=(2,3)$, let $P_1=(s_1,x_1,t_1)$. Define 
the clamp $Y_2$ to be the $4$-path $(1,3)-(1,2)-x_1-(3,2)$ anchored at its end vertices, and let clamp $Y_3$ be the complement of $P_1\cup Y_2$ anchored at $\{x_0\}$ (see Fig.\ref{pi1last} (i)). 

For $s_1\in \{(2,1),(2,2)\}$ and $t_1=(1,3)$ we define $P_1$ as the $s_1,t_1$-path contained in $(A(2)\cap Q)\cup\{t_1\}$. Let $Y_2$ and $Y_3$ be the clamps as before (see Fig.\ref{pi1last} (ii)). In both cases the requirements of Proposition \ref{partition} are satisfied.

For $s_1=(1,1)$ and $t_1=(2,3)$, we have two subcases. If $(1,3)\in \pi_0$, then
let $P_1=s_1 - (2,1) - x_1 - t_1$ be the linkage for $\pi_1$, let 
$Y_2=(1,3)-(1,2)-x_1-(3,2)$ considered as a clamp anchored at its end vertices, and define the clamp $Y_3$ to be the $6$-path
$A\cup B\cup \{(2,1)\}$ anchored at 
$\{x_0\}$ (see Fig.\ref{pi1last} (iii)). Since $(1,3)\in \pi_0\cap(Y_2\cap Y_3)$, there is a matching between the two clamps and $\pi_0$.
 	\begin{figure}[htp]
 \tikzstyle{T} = [rectangle, minimum width=.1pt, fill, inner sep=2.5pt]
\tikzstyle{B} = [rectangle, draw=black!, minimum width=1pt, fill=white, inner sep=1.5pt]
\tikzstyle{txt}  = [circle, minimum width=1pt, draw=white, inner sep=0pt]
\tikzstyle{Wedge} = [draw,line width=.7pt,-,black!100]
\tikzstyle{M} = [circle, draw=black!, minimum width=1pt, fill=white, inner sep=1pt]
\tikzstyle{A}  = [rectangle, line width=.7pt, draw=black, inner sep=2.5pt]

\begin{center}
\begin{tikzpicture}

   \draw[double,line width=.5pt]  (0,0)--(2,0)--(2,2)  (1,0) --(1,1) -- (0,1); 
   \draw[double, line width=.3pt,snake=zigzag]   (0,0)--(0,1) -- (0,2) -- (1,2) -- (2,2); 
   \draw[line width=2pt] (1,2) -- (1,1)-- (2,1);
            
         \node[B]() at (1,1){};       
              \node[B]() at (0,1){}; 
                    \node[B]() at (2,2){};                                             
                   \node[B]() at (2,0){};               
      \node[B] () at (0,0) {};    
        \node[B]() at (0,2){};   \node[A]() at (0,0){};  
          \node[B]() at (1,0){};  
            \node[A]() at (2,2){};   \node[A]() at (2,0){};  
            \node[T,label=above:$s_1$](s1) at (1,2){}; 
            \node[T,label=right:$t_1$](t1) at (2,1){};                                        
                                  
         \node[txt]() at (.4,1.6){$Y_2$};\node() at (.75,.75){$x_1$};
               \node[txt]() at (1.4,1.4){$P_1$};
                 \node[txt]() at (1.6,.4){$Y_3$};
         \node[txt]() at (1,-.9){(i)};            
     	\end{tikzpicture}	
	\begin{tikzpicture}

   \draw[double,line width=.5pt]  (0,0)--(2,0)--(2,1)  (1,2) --(1,0)  (0,1)--(1,1); 
   \draw[double, line width=.3pt,snake=zigzag]   (0,0)--(0,1) -- (0,2) -- (1,2) -- (2,2); 
   \draw[line width=2pt] (1,1) -- (2,1)-- (2,2);
            
   \node[B]() at (0,2){};  
         \node[T]() at (1,1){};      \node[A]() at (2,0){};   
          \node[A]() at (2,2){};   \node[A]() at (0,0){};  
              \node[B]() at (0,1){}; 
                    \node[B]() at (2,1){};                                             
                   \node[B]() at (2,0){};               
      \node[B] () at (0,0) {};    
        \node[B](s1) at (1,2){};
          \node[B]() at (1,0){};   
          
            \node[T,label=above:$t_1$](t1) at (2,2){};                                        
               \node[txt]() at (.75,.75){$s_1$};    
                          
         \node[txt]() at (.4,1.6){$Y_2$};
               \node[txt]() at (1.6,1.4){$P_1$};
                 \node[txt]() at (1.4,.4){$Y_3$};
         \node[txt]() at (1,-.9){(ii)};            
     	\end{tikzpicture}
	\hskip.5cm	
 	\begin{tikzpicture}

   \draw[double,line width=.5pt] (0,1)--(0,0)--(1,0) -- (2,0) -- (2,2) (0,1)--(2,1); 
   \draw[double, line width=.3pt,snake=zigzag]   (1,0)-- (1,2) -- (2,2); 
   \draw[line width=2pt] (0,2)--(0,1)--(2,1) ;    
     \draw[dashed] (0,2)--(1,2);
         
          \node[B]() at (0,1){};        \node[A]() at (2,0){};      
          \node[B]() at (1,0){}; 
           \node[B]() at (0,0){}; 
          \node[B] () at (1,1) {};                                
           \node[T]() at (2,2){};              
            \node[B]() at (2,0){};                          
         \node[B]() at (1,2){};    \node[A] () at (1,0){};
         
             \node[T,label=right:$t_1$](t1) at (2,1){};                 
           \node[T,label=above:$s_1$](s1) at (0,2){}; 
                    
         \node[txt]() at (1.4,1.6){$Y_2$};
               \node[txt]() at (.35,1.35){$P_1$};
               \node[txt]() at (1.6,.4){$Y_3$};
         \node[txt]() at (1,-.9){(iii)};            
     	\end{tikzpicture}
\begin{tikzpicture}

   \draw[double,line width=.5pt] (0,0)-- (2,0)  (1,2) --(1,0); 
   \draw[double, line width=.3pt,snake=zigzag]   (0,0)--(0,1)--  (2,1); 
   \draw[line width=2pt] (0,2)--(2,2)--(2,1) ;    
     \draw[dashed] (0,2)--(0,1) (2,0)--(2,1);
         
          \node[B]() at (0,1){};        \node[A]() at (0,0){};      
          \node[B]() at (1,0){}; 
           \node[B]() at (0,0){}; 
          \node[B] () at (1,1) {};                                
           \node[B]() at (2,2){};              
            \node[B]() at (2,0){};                          
         \node[B]() at (1,2){};    \node[A] () at (2,0){};
         
             \node[T,label=right:$t_1$](t1) at (2,1){};                 
           \node[T,label=above:$s_1$](s1) at (0,2){}; 
                    
         \node[txt]() at (1.6,1.6){$P_1$};
               \node[txt]() at (.4,.6){$Y_2$};
               \node[txt]() at (1.4,.4){$Y_3$};
         \node[txt]() at (1,-.9){(iv)};            
     	\end{tikzpicture}

	\caption{$\pi_1\subset Q- A$}
	\label{pi1last}
		\end{center}
	\end{figure}
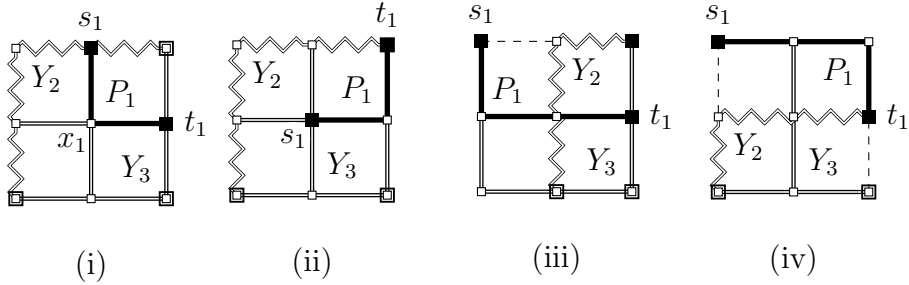
If $(1,3)\not\in \pi_0$, then we define
$P_1$ to be the $4$-path $s_1 - (1,2) - (1,3) - t_1$; let $Y_2=(3,1) - (2,1) - x_1 - (2,3) $  be a clamp anchord at its end vertices, and let $Y_3=(B(2)\cap Q)\cup A$ be a clamp anchored at $\{x_0\}$ (see Fig.\ref{pi1last} (iv)). In each of the difference sets 
$Y_2-Y_3$ and $Y_3-Y_2$ there is one vertex which can hold a terminal of $\pi_0$, thus Proposition \ref{partition} applies. 	
\end{proof}

\end{document}